\title{Exponential mixing properties for time inhomogeneous diffusion processes with killing}
\author{Pierre Del Moral\thanks{School of Maths and Stats, UNSW Sydney, Australia},
\and
Denis Villemonais
\thanks{TOSCA project-team, INRIA Nancy -- Grand Est, IECL -- UMR 7502, Universit\'e de Lorraine
B.P. 70239, 54506 Vandoeuvre-l\`es-Nancy Cedex, France }
}
\DeclareMathSymbol{\minus}{\mathord}{operators}{"2D}
\newtheorem{theorem}{Theorem}
\newtheorem{lemme}[theorem]{Lemma}
\newtheorem{proposition}[theorem]{Proposition}
\theoremstyle{remark}
\newtheorem{remark}{Remark}
\def\be{\begin{eqnarray}}
\def\ee{\end{eqnarray}}
\def\ben{\begin{eqnarray*}}
\def\een{\end{eqnarray*}}
\def\bei{\begin{itemize}}
\def\eei{\end{itemize}}
\def\me{\medskip \noindent}
\def\bi{\bigskip \noindent}
\def\E{\mathbb{E}}
\def\P{\mathbb{P}}
\def\R{\mathbb{R}}
\def\1{\mathbf{1}}
\def\d{\partial}
\begin{document}
\maketitle

\begin{abstract}
We consider an elliptic and time-inhomogeneous
diffusion process with time-periodic coefficients evolving in a bounded domain of $\mathbb{R}^d$ with a smooth boundary. 
The process is killed when it hits the boundary of the domain (hard killing) or after an exponential time (soft killing) associated with some bounded rate function. 
The branching particle interpretation of the non absorbed diffusion again behaves as a set of interacting particles evolving in an absorbing medium. Between absorption times, the particles
evolve independently one from each other according to the diffusion evolution operator; when a particle is absorbed, another selected particle splits into two offsprings.
This article is concerned with the stability properties of these non absorbed processes. Under some classical ellipticity properties on the diffusion process and 
some mild regularity properties of the hard obstacle boundaries, we prove an uniform exponential 
strong mixing property of 
  the process conditioned  to not be killed. We also provide uniform estimates w.r.t. the time horizon for the interacting particle interpretation of these non-absorbed processes, yielding what seems to be the first result of this type for this class of diffusion processes evolving in soft and hard obstacles, both in homogeneous and non-homogeneous time settings.
\end{abstract}

\noindent\textit{Keywords:}{ process with absorption;  uniform mixing property; time-inhomogeneous diffusion process}.

\medskip\noindent\textit{2010 Mathematics Subject Classification.} Primary: {60J25; 37A25; 60B10; 60F99}. Secondary: {60J80; 60G10; 92D25}.

\section{Introduction}
Let $D$ be a bounded open subset of $\mathds{R}^{d}$ ($d\geq 1$) whose boundary $\partial D$ is of class $C^2$ and consider the stochastic differential equation
\begin{equation}
  \label{eq:the-eds}
  dZ_t=\sigma(t,Z_t) dB_t+ b(t,Z_t) dt,\ Z_0\in D,
\end{equation}
where $B$ is a standard $d$ dimensional Brownian motion. We assume that the functions
\begin{equation*}
  \sigma:
  \begin{array}{l}
    [0,+\infty[\times\mathds{R}^d\rightarrow \mathds{R}^d\times\mathds{R}^d\\
    (t,x)\mapsto \sigma(t,x)
  \end{array}
  \text{ and }
  b:
  \begin{array}{l}
    [0,+\infty[\times\mathds{R}^d\rightarrow \mathds{R}^d\\
    (t,x)\mapsto b(t,x)
  \end{array}
\end{equation*}
 are continuous on $[0,+\infty[\times\R^d$. Moreover, we assume that they are time-periodic and Lipschitz in $x\in D$ uniformly in $t\in[0,+\infty[$. This means that there exist two constants  $\Pi> 0$ and $C_0>0$ such that, for all $x,y\in D$ and $t\geq 0$,
   \begin{align}
   \label{eq:assumption1}
     &\sigma(t+\Pi,x)=\sigma(t,x)\text{ and }b(t+\Pi,x)=b(t,x),\nonumber\\
     &\|\sigma(t,x)-\sigma(t,y)\|+|b(t,x)-b(t,y)|\leq k_0 |x-y|.
   \end{align}
In particular, there exists a solution to the stochastic differential equation~\eqref{eq:the-eds} (see \cite[Theorem 3.10, Chapter 5]{Ethier1986}). Moreover,
the solution is path-wise unique up to time $\tau_D=\inf\{t\geq
0,\ Z_t\notin D\}$ (see \cite[Theorem 3.7, Chapter 5]{Ethier1986}). Note that, since $\sigma,b$ are continuous in the compact set $[0,\Pi]\times \overline{D}$ and since they are time-periodic, both functions $\sigma$ and $b$ are uniformly bounded.

\me For all $s>0$ and any probability
distribution $\mu$ on $D$, we denote by $({\cal Z}^{\mu}_{s,t})_{t\geq
  s}$ the unique solution to this stochastic differential equation
starting at time $s>0$ with distribution $\mu$, killed when it hits the boundary and killed with a rate
$\kappa(t,{\cal Z}^{x}_{s,t})\geq 0$, where 
\begin{equation*}
\kappa: [0,+\infty[\times D \to \R_+
\end{equation*}
is a uniformly bounded non-negative measurable function which is also time-periodic  (with period $\Pi$). By \textit{``the process is
  killed''}, we mean that the process is sent to a cemetery point
$\partial \notin D$, so that the killed process is c\`adl\`ag almost
surely. If there exists $x\in D$ such that
$\mu=\delta_x$, we set $({\cal Z}^{x}_{s,t})_{t\geq s}=({\cal
  Z}^{\delta_x}_{s,t})_{t\geq s}$.
 When the process is killed by hitting the boundary, we say that
it undergoes a \textit{hard} killing; when the process is killed strictly
before reaching the boundary (because of the killing rate
$\kappa$), we say that it undergoes a \textit{soft} killing. We
denote the killing time by $\tau_{\partial}=\inf\{t\geq s,\,{\cal
  Z}^{x}_{s,t}=\partial\}$.

\medskip \noindent 
Let $(Q_{s,t})_{0\leq s \leq t}$ be the evolution operator of the diffusion process with killing. It is defined, for all $0\leq
s\leq t$, $x\in D$ and for any bounded measurable function
$f:\mathbb{R}^d\cup \{\partial\}\mapsto \mathbb{R}$ which vanishes
outside $D$, by
 \begin{eqnarray*}
   Q_{s,t} f(x)=\E\left(f({\cal Z}^x_{s,t})\1_{t<\tau_{\partial}}\right)=\E\left(f({\cal Z}^x_{s,t})\right).
 \end{eqnarray*}
We emphasize that, for any probability measure $\mu$ on $D$, the law
of ${\cal Z}^{\mu}_{s,t}$ is given by the probability measure $\mu
Q_{s,t}$, defined by
\begin{equation*}
  \mu Q_{s,t}(f) = \int_{D} Q_{s,t} f(x) d\mu(x).
\end{equation*}

\bigskip \noindent In this paper, we focus on the long time
behaviour of the distribution of ${\cal Z}^{\mu}_{s,t}$ conditioned to not be killed when it is observed, that is to the event 
$\{t<\tau_{\partial}\}$. This distribution is given by
\begin{equation}
  \label{eq:the-conditioned-distribution}
  \P({\cal Z}^{\mu}_{s,t}\in\cdot\,|\,t<\tau_{\partial})=\frac{\mu Q_{s,t}(\cdot)}{\mu Q_{s,t} (\mathbf{1})}.
\end{equation}

The first aim of this paper is to provide a sufficient criterion for the following exponential
    mixing property : 
there
 exist two constants $C>0$ and $\gamma>0$ such that, for any
 probability measure $\mu_1$ and $\mu_2$ on $D$,
 \begin{equation}
   \label{eq:the-mixing-property}
   \left\|\frac{\mu_1 Q_{s,t}}{\mu_1 Q_{s,t}
     (\1_D)}-\frac{\mu_2 Q_{s,t}}{\mu_2 Q_{s,t}
     (\1_D)}\right\|_{TV}\leq C e^{-\gamma (t-s)},
 \end{equation}
 where $\|\cdot\|_{TV}$ denotes the total variation norm between measures on $\R^d$.

The usual tools to prove convergence as in~\eqref{eq:the-mixing-property} for non-conditioned evolution operator involve coupling arguments: for example, contraction in total
variation norm  can be obtained using mirror and parallel coupling see~\cite{LindvallRogers1986,Wang2004,Priola2006}. However, the exponential contraction obtained for the non-conditioned evolution operator is not maintained after conditioning, because the denominator $\mu Q_{s,t} (\mathbf{1}_D)$ goes to $0$ when $t\rightarrow\infty$, usually as a stronger pace than the rate of contraction obtained by coupling methods for the unconditioned evolution operator.

In order to overcome this difficulty, our main tool will be to consider, for any time horizon $T>0$, the process conditioned not to be killed before time $T$ and to prove uniform rate of contraction in total variation norm for this collection of time-inhomogeneous processes. However, this can not be obtained directly from existing coupling methods, since the process conditioned not to be killed up to a
given time $t>0$ is a time-inhomogeneous diffusion process with a singular drift for which the traditional coupling methods mentioned above fail. For instance, a
standard $d$-dimensional Brownian motion $(B_t)_{t\geq 0}$ conditioned not to exit a smooth domain $D\subset\R^d$ up to a time $T>0$
has the law of the solution $(X^{(T)}_t)_{t\in[0,T]}$ to the stochastic differential equation
\begin{align*}
dX^{(T)}_t=dB_t+\left[\nabla \ln Q_{t,T}\1_D((X^{(T)}_t))\right]dt.
\end{align*}
Since the probability of not being killed $Q_{t,T}\1_D(x)$ vanishes when $x$ converges to the boundary $\partial D$ of $D$, the drift term in the above SDE is
singular and hence existing coupling methods do not apply directly. Moreover, an additional difficulty comes from the fact that the coupling coefficients must be controlled uniformly in the time horizon $T>0$, while the drift term in the above equation changes with $T$.

While the idea to consider the process conditioned to not be killed before a varying time horizon is not new when processes are only subject to a uniformly bounded soft killing rate (see~\cite{dp-2013}), it has never been used when a hard killing rate is involved because of the singularities mentioned above. Instead, convergence of conditioned diffusion processes with hard killing have been
obtained up to now using (sometimes involved) spectral theoretic arguments (see for instance, \cite{Cattiaux2009,Kolb2011,Littin2012,Miura2014} for
one-dimensional diffusion processes and \cite{Cattiaux2009,KnoblochPartzsch2010} for multi-dimensional diffusion processes) which all require at least $C^1$ diffusion coefficients and become impractical in the time-homogeneous setting.

In this paper, we use a new approach to obtain uniform coupling for the process conditioned to not be killed before a varying time horizon $T$ using existing coupling results for non-conditioned processes and fine control over the probability of killing before time $T$. Doing so, our objective is twofold : first we aim at relaxing the regularity assumptions required by the classical spectral methods, second we provide tools that apply to time-inhomogeneous processes with hard killing, which were not the case of previous approaches.

Note that our approach is entirely based on probabilistic tools which leads us to original results in both time-homogeneous and time-inhomogeneous cases. Let us first discuss the novelty of our results in the time-homogeneous situation. The most advanced results on the matter are given by Knobloch and Partzsch in~\cite{KnoblochPartzsch2010} and are obtained by spectral theory tools, whose main drawback is the essential requirement of regularity for the infinitesimal generator. More precisely, they require $\sigma$ to be of class $C^1$ on the whole domain $D$ in order to conclude. On the contrary, we only assume differentiability of $\sigma$ in an arbitrary neighbourhood of the boundary $\d D$, while the coefficients are only required to be Lipschitz in the rest of the domain. In fact, our criterion also applies to situations where the coefficient $\sigma$ is only Lipschitz in the whole domain $D$. We give several examples in the next section, where our criterion applies while existing results from~\cite{CattiauxMeleard2010}, \cite{KnoblochPartzsch2010}, \cite{Pinsky1985}, \cite{Gong1988} fail because of their uniform regularity requirements. 

Let us now comment the time-inhomogeneous setting. In this case, the classical spectral theory approach cannot be used since the infinitesimal generator  depends on time. Also, since the coefficients are assumed to be time-periodic, it is natural to ask whether discrete time results could be applied in this situation. However, no existing tools can handle a situation where the killing probability between two successive time is not uniformly bounded away from $0$, as it is in our case of a diffusion process with a hard boundary. On the contrary, our approach, by using fine couplings and hence the continuous time nature of diffusion processes paths, provides a new way to handle such situations. Note that our approach could lead to further developments. Indeed, while the periodic assumption is natural for applications, it would be interesting to know whether the result remains true without this assumption. In order to generalize our approach to non-periodic coefficients, one only has to prove that there exists two positive constants $\Pi>0$ and $C>0$ such that
\begin{align*}
\left\|Q_{s,t+\Pi}(\mathbf{1}_D)\right\|_\infty \leq C \left\|Q_{s+\Pi,t+\Pi}(\mathbf{1}_D)\right\|_\infty,\ \forall s\leq t.
\end{align*}
However, this apparently simple inequality is related to complex Harnack type inequalities that have not been developed yet. Of course, this inequality is fulfilled in the time periodic framework. We emphasize that the periodicity assumption can be a little bit relaxed, since our result also applies to time-inhomogeneous diffusion processes which are regular time-changes of periodic processes.

For the situation without hard killing, we refer the reader to Del Moral and Miclo~\cite{DelMoral-Miclo2002,DelMoral2003}. For a general account on the time-homogeneous setting, we refer the reader to the notion of quasi-stationary distribution (see the surveys of M\'el\'eard and Villemonais~\cite{Meleard2011} and ofvan Doorn and Pollett~\cite{vanDoornPollett2013}) and, for the particular case of diffusion processes, to the recent results of Knobloch and Partzsch~\cite{KnoblochPartzsch2010}, of Cattiaux and M\'el\'eard\cite{CattiauxMeleard2010}, and of the pioneering works of Pinsky~\cite{Pinsky1985} and of Gong et al.~\cite{Gong1988}. In a recent result, Champagnat and Villemonais~\cite{champagnat-villemonais-15} also provide an abstract necessary and sufficient criterion for the uniform exponential convergence of the conditioned semi-group to a unique quasi-stationary distribution.

The second aim of this paper is to prove a uniform approximation result for the conditioned distribution~\eqref{eq:the-conditioned-distribution}, based on a Fleming-Viot type interacting particle system. More precisely, we consider a particle system of fixed size $N\geq 2$ which evolves as follows. The $N$ particles start in $D$ and evolve as independent copies of $\cal Z$. When one of them is killed, it undergoes a rebirth: it is placed at the position of one other particle chosen uniformly among the remaining ones. Then the particles evolve as independent copies of $\cal Z$, until on of them killed and so on. This type of processes have been introduced independently by Burdzy, Holyst, Ingermann and March in the exploratory paper~\cite{Burdzy1996} and Del Moral and Miclo in~\cite{DelMoral-Miclo2000}. Our main result is that the empirical distribution of the process converges uniformly in time to the conditional distribution~\eqref{eq:the-conditioned-distribution}.  The stability analysis of the discrete time version of these particle absorption processes 
with hard and soft obstacles is developed by Del Moral and Doucet in~\cite{DelMoral-Doucet2004} and by Del Moral and Guionnet in~\cite{DelMoral-Guionnet2000}.

Let us finally mention that adding a soft killing to the process strongly modifies its behavior between two times $s$ and $s+t$ (where $s,t\geq 0$ are fixed) when it is conditioned to not be killed before an arbitrarily large time horizon $T>0$. This is not the case for the non-conditioned process : adding a soft killing only modifies the coupling estimates obtained for the non-conditioned process (between fixed times $s$ and $s+t$) by a constant factor. However, this is not true for the process conditioned to not be killed before time $T$, with arbitrary large values of $T$. Indeed, the probability of not being killed up to time $T$ without soft killing can not be easily related to the same quantity for the process with soft killing, but in the case of a constant (in space) killing rate. Since this quantity plays a fundamental role in the behavior (between times $s$ and $s+t$) of the process conditioned not be killed before time $T$, we have to take into account the killing rate function (even if it is uniformly bounded) along the whole proof. This is allowed by the key ingredient of our approach : the coupling construction and the gradient estimates of~\cite{Priola2006} and the non-degeneracy to the boundary result of~\cite{Villemonais2013}, which are obtained for diffusion processes with soft killing.

\bi In Section~\ref{sec:main-result}, we state our main result, which provides a sufficient criterion ensuring that the mixing property~\eqref{eq:the-mixing-property} holds for time inhomogeneous diffusion processes with both soft and hard
killings. The main tools of the proof, developed in Section~\ref{sec:strong-mixing-property}, are a tightness result for conditional distributions (see Villemonais~\cite{Villemonais2013}) and a coupling between diffusion processes 
(see Priola and Wang~\cite{Priola2006}).

\me In Section~\ref{sec:approximation-method-and-tightness}, we prove an interesting consequence of our main result. Namely, we show that our criterion also implies that the approximation method developed in~\cite{Villemonais_ESAIM_2014} converges uniformly in time to the conditional distribution of the solution to the stochastic differential equation~\eqref{eq:the-eds}.

\section{Main result}
\label{sec:main-result}
Let $\phi_D:D\mapsto \mathbb{R}_+$ denote the Euclidean
 distance to the boundary $\partial D$:
 \begin{equation*}
   \phi_D(x)=d(x,\partial D)=\inf_{y\in\partial D} |x-y|,
 \end{equation*}
 $|\cdot|$ being the Euclidean norm. According to \cite[Chapter 5, Section 4]{Delfour2001}, we can fix $a>0$ small enough so that
$\phi_D$ is of class $C_b^2$ on the boundary's neighbourhood $D^a\subset D$ defined by
 \begin{equation*}
   D^a=\{x\in D\text{ such that }\phi_D(x)<a\}.
 \end{equation*}

\bi \textbf{Assumption (H).}
   We assume that
   \begin{enumerate}
   \item 
           there exists a constant
           $c_0>0$ such that
           \begin{equation*}
             c_0|y| \leq |\sigma(t,x) y|,\ \forall (t,x,y)\in [0,+\infty[\times D\times \mathbb{R}^d.
           \end{equation*}
     \item there exist two measurable functions $f:[0,+\infty[\times
         D^a\rightarrow \mathds{R}_+$ and $g:[0,+\infty[\times
             D^a\rightarrow \mathds{R}$ such that $\forall
             (t,x)\in [0,+\infty[\times D^a$,
    \begin{equation*}
      \sum_{k,l}\frac{\partial \phi_D}{\partial x_k}(x) \frac{\partial
        \phi_D}{\partial x_l}(x)
      [\sigma\sigma^*]_{kl}(t,x)=f(t,x)+g(t,x),
    \end{equation*}
    and such that
    \begin{enumerate}
    \item $f$ is of class $C^1$ in time and of class $C^2$ in
      space, and the successive derivatives of $f$ are uniformly
      bounded,
    \item there exists a positive constant $k_g>0$ such that, for all
      $(t,x)\in[0,+\infty[ \times D^a$,
      \begin{equation*}
	|g(t,x)|\leq k_g\phi_D(x),
      \end{equation*}
    \end{enumerate}
   \end{enumerate}
   
  \me
The first point of Assumption (H) is a classical ellipticity assumption. The second point means that $\sum \d_k \phi_D \d_l\phi_D [\sigma\sigma^*]_{kl}$ can be approximated by a smooth function near the boundary, the error term being bounded by $k_g\phi_D$. In particular, it is satisfied as soon as $\sigma$ and $\phi_D$ are sufficiently smooth in a neighbourhood $D^a$ of $\d D$. More comments on this criterion and examples of processes satisfying assumption (H) are given after Theorem~\ref{thm:strong-mixing}.

  \begin{theorem}
    \label{thm:strong-mixing}
    Assume that assumption (H) is satisfied. Then there exist two constants $C>0$ and $\gamma>0$
    such that, for all $0\leq s\leq t$, 
    \begin{equation*}
      \sup_{\mu_1,\mu_2\in{\cal M}_1(D)}\left\|\frac{\mu_1 Q_{s,t}}{\mu_1
       Q_{s,t} \mathbf{1}_D}-\frac{\mu_2 Q_{s,t}}{\mu_2 Q_{s,t}
        \mathbf{1}_D}\right\|_{TV}\leq C e^{-\gamma (t-s)},
     \end{equation*}
     where ${\cal M}_1(D)$ is the set of probability measures on $D$.
  \end{theorem}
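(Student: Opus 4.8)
The plan is to route the argument through the evolution operators obtained by conditioning the process not to be killed before a finite horizon. Fix a horizon $T>0$; for $0\le s\le t\le T$ write $h^{(T)}_t(x):=Q_{t,T}\1_D(x)$ and set
\begin{equation*}
  R^{(T)}_{s,t}f(x):=\frac{Q_{s,t}\bigl(f\,h^{(T)}_t\bigr)(x)}{h^{(T)}_s(x)},
\end{equation*}
which is the law of ${\cal Z}^x_{s,t}$ conditioned on $\{T<\tau_{\partial}\}$. Because $Q_{s,t}h^{(T)}_t=h^{(T)}_s$, each $R^{(T)}_{s,t}$ is a genuine Markov kernel and satisfies $R^{(T)}_{s,u}R^{(T)}_{u,v}=R^{(T)}_{s,v}$ for $s\le u\le v\le T$. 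Taking $T=t$ recovers exactly~\eqref{eq:the-conditioned-distribution}, i.e.\ $\delta_xR^{(t)}_{s,t}=\delta_xQ_{s,t}/\delta_xQ_{s,t}\1_D$, and since $\mu Q_{s,t}/\mu Q_{s,t}\1_D$ is a mixture over $x$ of the measures $\delta_xR^{(t)}_{s,t}$, the convexity of $\|\cdot\|_{TV}$ reduces the theorem to bounding $\sup_{x,y\in D}\|\delta_xR^{(T)}_{s,t}-\delta_yR^{(T)}_{s,t}\|_{TV}$ uniformly in $T\ge t$. By submultiplicativity of Dobrushin's ergodic coefficient this exponential bound follows in turn from a single contraction over a fixed time-lag which is a multiple of the period: it suffices to exhibit $n_0\in\N$ and $\delta\in(0,1)$ such that, with $\tau:=n_0\Pi$,
\begin{equation}
  \label{eq:plan-onestep}
  \sup_{x,y\in D}\bigl\|\delta_xR^{(T)}_{s,s+\tau}-\delta_yR^{(T)}_{s,s+\tau}\bigr\|_{TV}\le 1-\delta\qquad\text{for all }s\ge0,\ T\ge s+\tau,
\end{equation}
and then iterate over $\lfloor(t-s)/\tau\rfloor$ blocks (the leftover partial block is a Markov kernel, hence a $\|\cdot\|_{TV}$-contraction) and set $T=t$.

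All the work lies in~\eqref{eq:plan-onestep}, which I would prove by producing a common minorant: a probability measure $\nu$ and a constant $\varepsilon_0>0$, both independent of $x,s,T$, with $\delta_xR^{(T)}_{s,s+\tau}\ge\varepsilon_0\,\nu$ for every $x\in D$. Split $[s,s+\tau]$ at an intermediate time $s+\tau_1$, $0<\tau_1<\tau$, and proceed in two stages. First, the non-degeneracy of the conditioned law with respect to the boundary (Villemonais~\cite{Villemonais2013}, which relies on Assumption (H) and, crucially, is uniform in the horizon): there are a compact $K=\{\phi_D\ge\varepsilon\}\subset D$ and $p_0>0$ with $\delta_xR^{(T)}_{s,s+\tau_1}(K)\ge p_0$ for all $x\in D$, $s\ge0$, $T\ge s+\tau$ --- this is where the hard killing (the conditioned process does not accumulate on $\partial D$) and the bounded soft rate $\kappa$ are controlled simultaneously. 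Second, a uniform minorization out of $K$: there exist $\nu$ and $c_1>0$ with $\delta_zR^{(T)}_{s+\tau_1,s+\tau}\ge c_1\,\nu$ for all $z\in K$, uniformly in $s$ and $T\ge s+\tau$. Since $\delta_zR^{(T)}_{s+\tau_1,s+\tau}(dw)=q_{s+\tau_1,s+\tau}(z,dw)\,h^{(T)}_{s+\tau}(w)/h^{(T)}_{s+\tau_1}(z)$ for the sub-Markov killed kernel $q$, this splits into a lower bound on the transition part --- $q_{s+\tau_1,s+\tau}(z,dw)\ge c\,\1_{K''}(w)\,dw$ for $z\in K$ and a slightly smaller compact $K''$, obtained from the uniform ellipticity of Assumption (H) together with the coupling and gradient estimates of Priola--Wang~\cite{Priola2006}, which is what allows the soft killing to be carried along --- and a uniform two-sided control of the conditioning weights $h^{(T)}$ on $K$ and $K''$. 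Combining the two stages through the Markov property gives $\delta_xR^{(T)}_{s,s+\tau}\ge p_0c_1\,\nu$, i.e.\ \eqref{eq:plan-onestep} with $\delta=p_0c_1$.

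The main obstacle is this last point: controlling the conditioning weights $h^{(T)}_{s+\tau_1}$ and $h^{(T)}_{s+\tau}$ --- in particular comparing $\inf_{K''}h^{(T)}_{s+\tau}$ with $\sup_K h^{(T)}_{s+\tau_1}$ and with $\|h^{(T)}_{s+\tau}\|_\infty$ --- \emph{uniformly in the possibly arbitrarily large horizon $T$}. Near $\partial D$ this is taken care of by the non-degeneracy estimate (whose boundary input is exactly Assumption (H.2), forcing $\sum\d_k\phi_D\,\d_l\phi_D\,[\sigma\sigma^*]_{kl}$ to be smooth up to an $O(\phi_D)$ error, so that the survival probability has a controlled linear decay at the boundary); in the interior it follows from the parabolic Harnack inequality for the unconditioned generator, valid since $\sigma\sigma^*$ is uniformly elliptic with continuous coefficients on compact subsets of $D$; and the uniformity in $T$ is where the time-periodicity of $\sigma$, $b$, $\kappa$ is essential, since it reduces the one-period ``smoothing'' of the survival profile to finitely many $u$-behaviors and is precisely the content of the inequality singled out in the introduction. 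It is also here that the soft killing must be kept throughout the argument: for a non-constant rate $\kappa$, the survival probability $Q_{u,T}\1_D$ is not simply related to its analogue without $\kappa$, so the dependence on $\kappa$ cannot be factored out of the estimates.
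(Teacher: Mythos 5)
Your overall skeleton is the same as the paper's: pass to the horizon-$T$ conditioned evolution operator $R^T_{s,t}f=Q_{s,t}(f\,Q_{t,T}\1_D)/Q_{s,T}\1_D$, prove a one-block minorization uniform in $s$ and $T$, iterate by Dobrushin contraction, and take $T=t$. The genuine gap is that the one-block minorization itself -- the whole content of Lemma~\ref{lem:strong-mixing-for-R} -- is not established; the part you label ``the main obstacle'' is described but not resolved, and both pillars of your two-stage construction lean on it. First, your Stage 1 claim that $\delta_xR^{(T)}_{s,s+\tau_1}(K)\geq p_0$ uniformly in $T$ is not what \cite{Villemonais2013} gives: that reference provides non-degeneracy/tightness for the law conditioned on survival up to the observation time (i.e.\ for $\delta_xQ_{s,u}/\delta_xQ_{s,u}\1_D$), whereas you need it after reweighting by the future survival probability $h^{(T)}_{s+\tau_1}=Q_{s+\tau_1,T}\1_D$ with $T$ arbitrarily far away; transferring the estimate already requires the uniform-in-$T$ comparison of $h^{(T)}$ near the boundary versus in the interior, which is precisely what you postpone. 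Second, Stage 2 needs, uniformly in $T$, something like $\int_{K''}h^{(T)}_{s+\tau}(w)\,dw\geq c'\,\sup_{K}h^{(T)}_{s+\tau_1}$, i.e.\ comparability of $h^{(T)}$ on a \emph{fixed} compact with $\|h^{(T)}\|_\infty$. You invoke interior parabolic Harnack plus periodicity, but give no argument that the near-maximizers of $h^{(T)}$ stay in a fixed compact rather than drifting (possibly toward $\partial D$) as $T$ grows, nor that Harnack constants can be chained uniformly over an unboundedly long remaining horizon. In addition, the pointwise bound $q_{s+\tau_1,s+\tau}(z,dw)\geq c\,\1_{K''}(w)\,dw$ does not follow from the coupling and gradient estimates of \cite{Priola2006}, which control $x\mapsto Q_{s,t}f(x)$ but give no lower bound on transition densities; with merely Lipschitz $\sigma$ you would have to import Krylov--Safonov-type machinery that the paper deliberately avoids.

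It is worth seeing how the paper closes exactly this gap, because it never compares $h^{(T)}$ on a prescribed compact with its supremum. Lemma~\ref{lem:Q-s-t-1-Q-s-t-1-max} uses the Priola--Wang coupling estimate together with the periodicity (via $\|Q_{s+\Pi,t}\1_D\|_\infty\leq \tfrac{2}{c_{\alpha_0}}\|Q_{s,t}\1_D\|_\infty$) to show that $Q_{u,T}\1_D\geq\tfrac12\|Q_{u,T}\1_D\|_\infty$ on a ball $B(x_{u,T},r_0)$ of \emph{uniform} radius around the (unknown, $T$-dependent, possibly boundary-near) argmax. Lemma~\ref{lem:existence-of-mu-s} builds, from the coupling itself rather than from a density lower bound, a minorization $Q_{s,s+1}f(x_i)\geq\beta\,Q_{s,s+1}\1_D(x_i)\,\nu^{x_1,x_2}_s(f)$ whose minorizing measures charge \emph{every} ball of radius $r_0$ uniformly. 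Then, in $R^T_{s,s+1}f(x)=Q_{s,s+1}(f\,h^{(T)}_{s+1})(x)/Q_{s,T}\1_D(x)$, restricting $\nu^{x_1,x_2}_s$ to the argmax ball and using only the trivial inequality $Q_{s,T}\1_D(x)\leq\|h^{(T)}_{s+1}\|_\infty\,Q_{s,s+1}\1_D(x)$ makes the $T$-dependent weights cancel -- no Harnack inequality for $h^{(T)}$, no localization of its maximizer, and no $T$-uniform tightness for $R^{(T)}$ at intermediate times is ever needed. Until you supply an argument of this kind (or a genuine proof of your uniform-in-$T$ weight comparison), your one-step contraction, and hence the theorem, is not proved.
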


 \medskip\noindent Before turning to the proof of Theorem~\ref{thm:strong-mixing} in Section~\ref{sec:strong-mixing-property}, we give in Remark~\ref{rem:1} four simple  examples of diffusion processes that enter our settings but are not covered by existing results. Then we present in the next section an interesting consequence of Theorem~\ref{thm:strong-mixing} for an approximation method based on a Fleming-Viot type interacting particle system. Note that the approximation method described in the next section is itself used in the proof of Theorem~\ref{thm:strong-mixing}, through the tightness results obtained in~\cite{Villemonais2013}.

\bigskip
\begin{remark}
\label{rem:1}
The second point of assumption~(H) is the only differentiability assumption which is required on the coefficient of the SDE \eqref{eq:the-eds}. We emphasize that it only requires differentiability in an arbitrary small neighbourhood of the boundary $\d D$. Note also that it is fulfilled as soon as the boundary is of class $C^3$ and $\sigma\sigma^*$ is of class $C^1$ in time and $C^2$ in space in the chosen neighbourhood of the boundary. Here are simple and quite natural examples that are not covered by existing results but  enter our settings.
\begin{enumerate}
\item Consider the time-homogeneous setting, where $D$ is the unit ball of $\R^d$, $b(t,x)=0$ and $\sigma(t,x)=(1+\phi_D(x))I_d$ if $x\in D$ and $\sigma(t,x)=I_d$ otherwise. This corresponds, informally, to a standard Brownian motion whose variance increases when the path approaches the center of the domain $D$. In this case, $\sigma$ clearly fulfils the first point of assumption (H), while $\phi_D$ and $\sigma$ are of class $C^\infty$  in $D_{\frac{1}{2}}$ so that the second point of the assumption is also fulfilled and Theorem~\ref{thm:strong-mixing} holds. Note that, since $\sigma$ is not of class $C^1$ in the whole domain $D$, no existing result applies to this simple situation.
\item Consider the situation where $D$ is the unit ball of $\mathbb{R}^2$, $\sigma(t,x)=I_d$ and $b(t,x)=\varphi_D(x)x$ for all $x\in D$. This clearly fulfils Assumption (H), so that Theorem~\ref{thm:strong-mixing} applies. However, no existing result in the literature covers this simple case. Indeed, the general result proven in~\cite{KnoblochPartzsch2010} only applies in $\R^d$ for $d\geq 3$. The studies that succeeded in studying the conditional limit of $2$-dimensional processes (see~\cite{CattiauxMeleard2010} and~\cite{Gong1988}) require at least $C^1$ regularity for both $\sigma$ and $b$. But, in our example, this last function is not $C^1$ in the whole domain $D$.
\item Consider the situation where $D$ is the unit disc of $\R^d$, $\sigma(t,x)=I_d$ and $b(t,x)=R_t$ for $x\in D$, where $R_t$ denotes any time periodic vector. In this situation, all the coefficients are $C^\infty$ in space and $\sigma$ is $C^\infty$ in time. Since the coefficients are time periodic,  we deduce that Theorem~\ref{thm:strong-mixing} applies. However, because of the time-inhomogeneity of $b(t,x)$, no existing continuous time result covers this situation. Also, since the probability of extinction between two successive periods is not bounded away from $1$, no existing discrete time result applies.
\item In fact, our result do not require the diffusion coefficient $\sigma$ to be differentiable at all. For instance, consider the situation where $D$ is the unit ball of $\R^d$ and let $h:\R^d\rightarrow[0,1/2]$ be any positive Lipschitz function. Then the diffusion process solution to the SDE~\eqref{eq:the-eds} with $b(t,x)=0$ and $\sigma(t,x)=(1+h(x)\phi_D(x))I_d$ enters our setting while $\sigma(t,x)$ is \textit{a priori} only Lipschitz in $D$. This is, up to our knowledge, the first result covering such an example.
\end{enumerate}
\end{remark}

 \section{Uniform convergence of a Fleming-Viot type approximation method to the conditional distribution}
 \label{sec:approximation-method-and-tightness}

We present in this section an interesting consequence of the mixing property for
 the approximation method developed in~\cite{Villemonais_ESAIM_2014} : assuming that Assumption~(H) holds, we prove that the
 approximation converges
 uniformly in time.  The particle approximation method has been introduced by Burdzy,
Holyst, Ingerman and March~\cite{Burdzy1996} for standard Brownian
motions, and studied later by Burdzy, Ho{\l}yst and
March~\cite{Burdzy2000} and by Grigorescu and
Kang~\cite{Grigorescu2004} for Brownian motions, by Del Moral and
Miclo~\cite{DelMoral-Miclo2000,DelMoral2003} and by Rousset~\cite{Rousset2006} for
jump-diffusion processes with smooth killings, Ferrari and Mari\`c~\cite{Ferrari2007}
for Markov processes in countable state spaces,
in~\cite{Villemonais2010} for diffusion processes and
in~\cite{Villemonais_ESAIM_2014} for general Markov processes. 
The discrete time version of these interacting particle models on general measurable spaces, including approximations of non absorbed trajectories in terms of genealogical trees is developed in~\cite{DelMoral-Doucet2004,DelMoral-Guionnet2000,DelMoral-Miclo2000b}. For a more detailed discussion, including applications of these discrete generation particle techniques in advanced signal processing, statistical machine learning, and quantum physics, we also refer the reader to the recent monograph~\cite{dp-2013}, and the references therein.

\me
The approximation method is based on a sequence of Fleming-Viot type
interacting particle systems whose associated sequence of empirical
distributions converges to the conditioned
distribution~\eqref{eq:the-conditioned-distribution} when the
number of particles tends to infinity. More precisely, fix $N\geq 2$ and let us define
the Fleming-Viot type interacting particle system with $N$
particles. The system of $N$ particles
$(X^{1,N}_{s,t},...,X^{N,N}_{s,t})_{t\geq s}$ starts from an
initial state $(X^{1,N}_{s,s},...,X^{N,N}_{s,s})\in D^N$, then:
 \begin{itemize}
   \item The particles evolve as $N$ independent copies of ${\cal
     Z}^{X^{i,N}_{s,s}}_{s,\cdot}$ until one of them, say $X^{i_1,N}$, is killed. The
     first killing time is denoted by $\tau^{N}_1$. We emphasize that
     under our hypotheses, the particle killed at time $\tau^{N}_1$ is
     unique~\cite{Villemonais_ESAIM_2014}.
   \item At time $\tau^{N}_1$, the particle $X^{i_1,N}$ jumps on
     the position of an other particle, chosen uniformly among the
     $N-1$ remaining ones. After this operation, the position
     $X^{i,N}_{s,\tau_1^N}$ is in $D$, for all $i\in\{1,...,N\}$.
   \item Then the particles evolve as $N$ independent copies of ${\cal
     Z}^{X^{i,N}_{s,\tau_1^N}}_{\tau_1^N,\cdot}$ until one of them, say $X^{i_2,N}$, is
     killed. This second killing time is denoted by $\tau^{N}_2$. Once
     again, the killed particle is uniquely determined and the $N-1$ other particles are in $D$.
   \item At time $\tau^{N}_2$, the particle $X^{i_2,N}$ jumps on the position of one particle, chosen uniformly among the $N-1$ particles that are in $D$ at time $\tau_2^N$.
   \item Then the particles evolve as independent copies of ${\cal
     Z}^{X^{i,N}_{s,\tau^N_2}}_{\tau^N_2,\cdot}$ and so on.
 \end{itemize}
 We denote by $0<\tau^{N}_1<\tau^{N}_2<...<\tau^{N}_n<...$ the
 sequence of killing/jump times of the process. By~\cite{Villemonais_ESAIM_2014}, Assumption~(H) implies that
 \begin{equation*}
   \lim_{n\rightarrow\infty} \tau^{N}_n=+\infty,\ \text{almost surely.}
 \end{equation*}
  In particular, the above algorithm defines a Markov process
  $(X^{1,N}_{s,t},...,X^{N,N}_{s,t})_{t\geq s}$. For all $N\geq 2$
  and all $0\leq s\leq t$, we denote by $\mu^{N}_{s,t}$ the
  empirical distribution of $(X^{1,N}_{s,t},...,X^{N,N}_{s,t})$,
  which means that
  \begin{equation*}
    \mu^{N}_{s,t}(\cdot)=\frac{1}{N} \sum_{i=1}^N
    \delta_{X^{i,N}_{s,t}}(\cdot) \in {\cal M}_1(D),
  \end{equation*}
  where ${\cal M}_1(D)$ denotes the set of probability measures on the open set
  $D$.  This branching type particle model with fixed population size is closely related to the class of
   Moran and Fleming-Viot processes arising in measure valued super-processes~\cite{DelMoral-Miclo2000,ek-95,Fleming-viot}.  The main difference 
   with these super-processes comes from the fact that the occupation measures of the system converge to a deterministic limit measure valued process, as the size of the population tends to $\infty$.
These particle absorption models  can also be interpreted as an extended version of the Nanbu type mean field  particle model developed by C. Graham and S. M\'el\'eard~\cite{gm-97} in the context of spatially homogeneous Boltzmann equations. The next results provide an uniform estimate w.r.t. the time horizon.

\begin{theorem}
\label{thm:uniform-convergence}
Assume that Hypothesis (H) holds and
that the family of empirical distributions $(\mu^{N}_{s,s})_{s\geq
  0,\,N\geq 2}$ of the initial distributions of
the interacting particle system described above
is tight in ${\cal M}_1(D)$. Then
\begin{equation*}
  \lim_{N\rightarrow\infty} \sup_{s\geq 0}\sup_{t\in[s,+\infty[} \sup_{f\in{\cal B}_1(D)} \mathbb{E}\left|
      \mu^{N}_{s,t}(f) - \frac{\mu^{N}_{s,s} Q_{s,t} (f)}{\mu^{N}_{s,s} Q_{s,t}(\mathbf{1}_D)} \right|=0.
\end{equation*}
\end{theorem}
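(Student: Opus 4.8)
The plan is to combine the exponential mixing estimate from Theorem~\ref{thm:strong-mixing} with a discrete-in-time error propagation analysis for the Fleming-Viot system, of the type familiar from the mean-field particle system literature (e.g.\ the monograph~\cite{dp-2013}). First I would set up the standard decomposition of the particle error. Fix $s\geq 0$ and write, for $t\geq s$,
\begin{equation*}
  \mu^{N}_{s,t}(f) - \frac{\mu^{N}_{s,s}Q_{s,t}(f)}{\mu^{N}_{s,s}Q_{s,t}(\mathbf{1}_D)}
  = \sum_{r} \left[ \Phi_{u_r,t}(\mu^{N}_{s,u_r})(f) - \Phi_{u_{r-1},t}(\mu^{N}_{s,u_{r-1}})(f) \right],
\end{equation*}
where $\Phi_{u,t}(\nu)(\cdot) = \nu Q_{u,t}(\cdot)/\nu Q_{u,t}(\mathbf{1}_D)$ is the conditioned evolution (normalized) semigroup and $s=u_0<u_1<\cdots<u_n=t$ is a suitable mesh of the interval $[s,t]$ (taking $u_r = s + r\delta$ for some fixed step $\delta>0$, with the last step truncated). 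Each summand measures the local fluctuation introduced by the particle dynamics over one step $[u_{r-1},u_r]$, then transported forward by $\Phi_{u_r,t}$.

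Second, I would control each term. The transport map $\Phi_{u_r,t}$ is a contraction in total variation: by Theorem~\ref{thm:strong-mixing}, for any two probability measures $\eta_1,\eta_2$ on $D$,
\begin{equation*}
  \left\| \Phi_{u_r,t}(\eta_1) - \Phi_{u_r,t}(\eta_2) \right\|_{TV} \leq C e^{-\gamma (t-u_r)}.
\end{equation*}
Actually what is needed is slightly more: a \emph{Lipschitz} contraction estimate, i.e.\ that $\|\Phi_{u_r,t}(\eta_1)-\Phi_{u_r,t}(\eta_2)\|_{TV} \leq C e^{-\gamma(t-u_r)} \|\eta_1-\eta_2\|_{TV}$, which follows from Theorem~\ref{thm:strong-mixing} combined with the semigroup property $\Phi_{u_r,t} = \Phi_{u_r+1,t}\circ\Phi_{u_r,u_r+1}$ and the fact that over a \emph{bounded} time interval the normalized map has Lipschitz constant bounded by a universal constant (this last bound uses the lower bound on $Q_{u,u+1}\mathbf{1}_D$ away from $\partial D$ and the Harnack/non-degeneracy inputs behind (H)). Granting this, the $r$-th summand is bounded by $Ce^{-\gamma(t-u_r)}$ times the local one-step particle error $\mathbb{E}\|\mu^{N}_{s,u_r} - \Phi_{u_{r-1},u_r}(\mu^{N}_{s,u_{r-1}})\|$, which by the many-interacting-particle estimates for Fleming-Viot systems (propagation of chaos / $L^1$ bias bound) is of order $c(\delta)/\sqrt{N}$ uniformly in $r$ and in the starting configuration, since the number of particles is fixed and the selection/mutation mechanism satisfies the regularity needed (uniqueness of the killed particle, $\tau^N_n\to\infty$, from~\cite{Villemonais_ESAIM_2014}). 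Summing the geometric series $\sum_r C e^{-\gamma(t-u_r)} \cdot c(\delta)/\sqrt{N} \leq C'(\delta)/\sqrt{N}$ with a bound uniform in $s$ and $t$ yields the claim, after letting $N\to\infty$.

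The main obstacle, and the place where the paper's hypotheses and the tightness of the initial distributions genuinely enter, is obtaining the local one-step particle estimate \emph{uniformly over all starting configurations and all time origins}. The difficulty is exactly the one flagged repeatedly in the introduction: over a single step $[u_{r-1},u_r]$ the probability of being killed is not bounded away from $0$, so the normalizing constants $\mu Q_{u,u+\delta}\mathbf{1}_D$ can be small, which would blow up the per-step error. This is where one needs the non-degeneracy-to-the-boundary result of~\cite{Villemonais2013} together with Assumption~(H): it guarantees that, after one step, the conditioned empirical distribution charges a compact subset of $D$ with overwhelming probability, so the relevant ratios are controlled; and it is precisely for this step that the tightness assumption on $(\mu^{N}_{s,s})_{s,N}$ is used, to start the induction. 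I would therefore structure the proof as: (i) a uniform one-step estimate for the conditioned particle empirical measure (using~(H) and~\cite{Villemonais2013, Villemonais_ESAIM_2014}); (ii) the Lipschitz-contraction consequence of Theorem~\ref{thm:strong-mixing} over arbitrary horizons; (iii) the telescoping sum and geometric summation giving the uniform-in-time $O(1/\sqrt N)$ bound; and then conclude. The bookkeeping in (i) — controlling the small-denominator events uniformly in the configuration — is the genuinely technical part; everything downstream is a routine telescoping argument.
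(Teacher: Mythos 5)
Your strategy---telescoping along a mesh, a contraction estimate for the normalized flow $\Phi_{u,t}$, a per-step sampling error of order $1/\sqrt N$, and a geometric summation---is the classical uniform-in-time propagation-of-chaos argument from the soft-killing/Feynman--Kac literature (\cite{DelMoral-Guionnet2000,dp-2013}), and it is genuinely different from the paper's proof. But as written it has a real gap, located exactly at the two estimates you take for granted. First, the asserted Lipschitz contraction $\|\Phi_{u,t}(\eta_1)-\Phi_{u,t}(\eta_2)\|_{TV}\le Ce^{-\gamma(t-u)}\|\eta_1-\eta_2\|_{TV}$ with a constant independent of $\eta_1,\eta_2$ does not follow from Theorem~\ref{thm:strong-mixing}, which only gives the uniform bound $Ce^{-\gamma(t-u)}$ without the factor $\|\eta_1-\eta_2\|_{TV}$; and the claim that the one-step normalized map has a \emph{universal} Lipschitz constant is false in the hard-killing setting: the standard computation yields a constant of order $\|Q_{u,u+\delta}\mathbf{1}_D\|_\infty/\eta Q_{u,u+\delta}(\mathbf{1}_D)$, which blows up as $\eta$ charges neighbourhoods of $\partial D$---and the measures you must plug in are the random empirical measures $\mu^N_{s,u_r}$, which can sit near the boundary. (Note also that the contraction proved in the paper is for the \emph{linear} operators $R^T_{s,t}$, which act on Dirac masses; it does not transfer to the nonlinear map $\Phi$ without again controlling the weight ratio $\|Q\mathbf{1}_D\|_\infty/\eta Q\mathbf{1}_D$.) Second, and for the same reason, the per-step particle estimate is not $O(1/\sqrt N)$ ``uniformly in the starting configuration'': the available quantitative bound, \cite[Theorem~2.2]{Villemonais_ESAIM_2014}, carries the factor $1/\mu^N_{s,u_{r-1}}Q_{u_{r-1},u_r}(\mathbf{1}_D)$, so uniformity over configurations fails and can only be restored on a high-probability event via tightness, at every one of the $\sim(t-s)/\delta$ mesh times, uniformly in $s$, $t$ and $N$. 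You flag this as ``the main obstacle'' and defer it to an unproven step (i); but that step, together with the missing Lipschitz bound, is the actual content of the theorem, so the proposal is not a complete proof.

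For comparison, the paper avoids telescoping altogether and needs no Lipschitz estimate for $\Phi$. It fixes $t_0$ with $2e^{-\gamma(t_0-1)}\le\epsilon/6$; for $t\le t_0$ it applies \cite[Theorem~2.2]{Villemonais_ESAIM_2014} and controls the denominator through the assumed tightness of the initial empirical measures (Markov's inequality plus a uniform lower bound $\beta_\epsilon$ on $Q_{s,s+t_0}\mathbf{1}_D$ away from the boundary); for $t\ge t_0$ it splits \emph{once}, at time $s+1+t-t_0$: the first piece is the bounded-horizon estimate restarted from $\mu^N_{s,s+1+t-t_0}$ (Markov property of the particle system), with the denominator now controlled by the uniform tightness of the empirical measures at intermediate times, \cite[Theorem~3.1]{Villemonais2013}; the second piece is a single application of Theorem~\ref{thm:strong-mixing} to the two initial laws $\mu^N_{s,s+1+t-t_0}$ and $\mu^N_{s,s}Q_{s,s+1+t-t_0}$, bounded by $2e^{-\gamma(t_0-1)}$. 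If you want to salvage your route you would have to replace the claimed contraction by the minimum of the uniform bound of Theorem~\ref{thm:strong-mixing} (for the terms with $t-u_r$ large) and a local Lipschitz bound valid only on the tight event (for the last $O(t_0/\delta)$ terms), quantifying the bad events at each mesh time via \cite{Villemonais2013}; once this is done carefully, the argument essentially collapses to the paper's single-split proof, which is the leaner way to organize it.
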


\medskip
\begin{proof}
Fix $\epsilon>0$. Our aim is to prove
that there exists $N_{\epsilon}\geq 2$ such that, for all $N\geq
N_{\epsilon}$ and all measurable function $f:D\rightarrow \mathbb{R}$
satisfying $\|f\|_{\infty}\leq 1$, we have
\begin{equation}
  \label{eq:caracterisation-epsilon}
  \sup_{s,t\in[0,+\infty[} \mathbb{E}\left| \mu^{N}_{s,s+t}(f) -
      \frac{\mu^{N}_{s,s} Q_{s,s+t} (f)}{\mu^{N}_{s,s} Q_{s,s+t}(\mathbf{1}_D)}
      \right|\leq \epsilon.
\end{equation}

\bigskip \noindent
Let $\gamma$ be the constant of Theorem~\ref{thm:strong-mixing} and fix
$t_0\geq 1$ such that $2e^{-\gamma(t_0-1)}\leq \epsilon/6$. In a first step, we prove that~\eqref{eq:caracterisation-epsilon} holds for $t\leq t_0$. In a second step we prove that it holds for $t\geq t_0$.

\bi \textit{Step 1: Inequality~\eqref{eq:caracterisation-epsilon} holds for $t\leq t_0$.}\\
 Since the sequence of initial distributions is
assumed to be uniformly tight (w.r.t. the time parameter and the size of the system), there exists $\alpha_1=\alpha_1(\epsilon)>0$ such that, $\forall N\geq 2$,
\begin{align*}
\E(\mu^{N}_{s,s}(D^{\alpha_1}))\leq \frac{\epsilon}{8}.
\end{align*}
Now, since the coefficients of the SDE \eqref{eq:the-eds} and the killing rate $\kappa$
are uniformly bounded, the probability for the process ${\cal Z}^x_{s,\cdot}$ starting from $x\in( D^{\alpha_1})^c$ to be killed after time $s+t_0$ is uniformly bounded below by a positive constant. In other words, the constant $\beta_\epsilon$ defined below is positive :
\begin{equation*}
  \beta_{\epsilon}\stackrel{def}{=}\inf\left\{ Q_{s,s+t_0}\mathbf{1}_D(x),\ s\in[0,+\infty[,\ x\in
    \left(D^{\alpha_{1}}\right)^c\right\}>0.
\end{equation*}
Now, fix $s\geq 0$. For all $t\in[0,t_0]$, we have
\begin{equation*}
  \mathbb{E}\left| \mu^{N}_{s,s+t}(f) - \frac{\mu^{N}_{s,s}
    Q_{s,s+t} (f)}{\mu^{N}_{s,s} Q_{s,s+t}(\mathbf{1}_D)}
  \right|=\int_{{\cal M}^1(D)}\mathbb{E}^{N,s}_{\mu}\left| \mu^{N}_{s,s+t}(f)
    - \frac{\mu^{N}_{s,s} Q_{s,s+t} (f)}{\mu^{N}_{s,s}
      Q_{s,s+t}(\mathbf{1}_D)} \right|
    \,d\P(\mu^N_{s,s}=\mu),
\end{equation*}
where $\E^{N,s}_\mu$ (resp. $\P^{N,s}_\mu$) denotes the expectation (resp. the probability) with respect to the law of the Fleming-Viot system with $N$ particles and initial deterministic empirical distribution $\mu$, whose particles evolve as independent copies of ${\cal Z}_{s,\cdot}$ between their rebirths.

In our diffusion process setting, for any $s\leq t$ and  any probability distribution $\mu$, the probability of not being killed before time $t$, given by $\mu Q_{s,t}\1_D$, is positive. As a consequence, we can make use of Theorem~\cite[Theorem~2.2]{Villemonais_ESAIM_2014} to deduce that
\begin{equation*}
\mathbb{E}^{N,s}_{\mu}\left(\left| \mu^{N}_{s,s+t}(f)
    - \frac{\mu^{N}_{s,s} Q_{s,s+t} (f)}{\mu^{N}_{s,s}
      Q_{s,s+t}(\mathbf{1}_D)} \right|
    \right)
\leq  \frac{2(1+\sqrt{2})}{\sqrt{N}}\sqrt{\E^{N,s}_\mu\left(\frac{1}{\mu^{N}_{s,s} Q_{s,s+t}(\mathbf{1}_D)^2 }\right)}.
\end{equation*}
Using the fact that $\mu^N_{s,s}=\mu$ almost surely under $\P^{N,s}_\mu$, we deduce that
\begin{align*}
\mathbb{E}^N_{\mu}\left(\left| \mu^{N}_{s,s+t}(f)
    - \frac{\mu^{N}_{s,s} Q_{s,s+t} (f)}{\mu^{N}_{s,s}
      Q_{s,s+t}(\mathbf{1}_D)} \right|
    \right)
\leq  \frac{2(1+\sqrt{2})}{\sqrt{N}\mu Q_{s,s+t}(\mathbf{1}_D) }.
\end{align*}
Since $\|f\|_{\infty}\leq 1$, we have $\left| \mu^{N}_{s,s+t}(f) - \frac{\mu^{N}_{s,s} Q_{s,s+t}
  (f)}{\mu^{N}_{s,s} Q_{s,s+t}(\mathbf{1}_D)} \right|\leq 2$ almost
surely and we deduce from the previous inequality that
\begin{align*}
  \mathbb{E}\left| \mu^{N}_{s,s+t}(f) - \frac{\mu^{N}_{s,s} Q_{s,s+t} (f)}{\mu^{N}_{s,s}
    Q_{s,s+t}(\mathbf{1}_D)} \right|
  &\leq \frac{\epsilon}{2} + 2\,
        \P\left(\frac{2(1+\sqrt{2})}{\sqrt{N}\mu^{N}_{s,s} Q_{s,s+t}(\mathbf{1}_D) }\geq
        \frac{\epsilon}{2}\right)\\
  &\leq \frac{\epsilon}{2} +2 \,
     \P\left(
        \mu^{N}_{s,s}Q_{s,s+t}(\mathbf{1}_D) \leq
        \frac{4(1+\sqrt{2})}{\epsilon \sqrt{N}}
      \right)
\end{align*}
But $\mu^{N}_{s,s}Q_{s,s+t}(\mathbf{1}_D) \geq
\mu_{s,s}^{N}\left((D^{\alpha_1})^c\right) \beta_{\epsilon}$ almost surely, thus
\begin{align*}
    \mathbb{E}\left| \mu^{N}_{s,s+t}(f) - \frac{\mu^{N}_{s,s} Q_{s,s+t} (f)}{\mu^{N}_{s,s}
      Q_{s,s+t}(\mathbf{1}_D)} \right|
    &\leq \frac{\epsilon}{2} +2\, \P\left(
      \mu_{s,s}^{N}\left((D^{\alpha_1})^c\right) \leq
      \frac{4(1+\sqrt{2})}{\epsilon \sqrt{N} \beta_{\epsilon}} \right)\\
    &\leq \frac{\epsilon}{2}+2\,
      \P\left( \mu_{s,s}^{N}\left(D^{\alpha_{1}}\right) \geq
      1-\frac{4(1+\sqrt{2})}{\epsilon \sqrt{N} \beta_{\epsilon}} \right)\\
    &\leq
      \frac{\epsilon}{2}+ \frac{2}{1-\frac{4(1+\sqrt{2})}{\epsilon \sqrt{N} \beta_{\epsilon}}}
      \E\left( \mu_{s,s}^{N}\left(D^{\alpha_1}\right) \right)\\
    &\leq \frac{\epsilon}{2} +2\frac{1}{1-\frac{4(1+\sqrt{2})}{\epsilon \sqrt{N} \beta_{\epsilon}}}\, \frac{\epsilon}{8},
\end{align*}
where we used Markov's
inequality. Finally, there exists $N_1=N_1(\epsilon)\geq 2$ such that, $\forall N\geq N_1$,
\begin{equation*}
 \sup_{s\geq 0} \sup_{t\in[0,t_0]}  \mathbb{E}\left| \mu^{N}_{s,s+t}(f) - \frac{\mu^{N}_{s,s} Q_{s,s+t} (f)}{\mu^{N}_{s,s}
      Q_{s,s+t}(\mathbf{1}_D)} \right|
    \leq \epsilon.
\end{equation*}

\bi \textit{Step 2: Inequality~\eqref{eq:caracterisation-epsilon} holds for $t\geq t_0$.}\\
Fix now $t\geq t_0$. We have
\begin{multline*}
  \mathbb{E}\left| \mu^{N}_{s,s+t}(f) - \frac{\mu^{N}_{s,s} Q_{s,s+t} (f)}{\mu^{N}_{s,s}
    Q_{s,s+t}(\mathbf{1}_D)} \right| 
  \ \leq 
  \ \mathbb{E}\left|
  \mu^{N}_{s,s+t}(f) - \frac{\mu^{N}_{s,s+1+t-t_0} Q_{s+1+t-t_0,s+t} (f)}{\mu^{N}_{s,s+1+t-t_0}
    Q_{s+1+t-t_0,s+t}(\mathbf{1}_D)}
  \right|\\
  \ +\ 
  \mathbb{E}\left|
  \frac{\mu^{N}_{s,s+1+t-t_0} Q_{s+1+t-t_0,s+t} (f)}{\mu^{N}_{s,s+1+t-t_0}
    Q_{s+1+t-t_0,s+t}(\mathbf{1}_D)} - \frac{\mu^{N}_{s,s} Q_{s,s+t} (f)}{\mu^{N}_{s,s}
    Q_{s,s+t}(\mathbf{1}_D)}
  \right|.
\end{multline*}
By~\cite[Theorem~3.1]{Villemonais2013}, there exists
$\alpha_2=\alpha_2(\epsilon)>0$ and $N_2=N_2(\epsilon)\geq 2$ such that for all $s\in[0,+\infty[$, $t\geq t_0$ and $N\geq N_2$,
\begin{equation*}
   \E\left( \mu^{N}_{s,s+1+t-t_0}(D^{\alpha_2}) \right)\leq \epsilon.
\end{equation*}
Since $\alpha_1$ and $\alpha_2$ can be chosen arbitrarily small, one can assume
without loss of generality that
$\alpha_1=\alpha_2$. Now, using step 1 for the particle system with initial distribution $\mu^{N}_{s,s+1+t-t_0}$ and the Markov property for the particle system, we deduce that, for all $N\geq 2$,
\begin{equation*}
  \mathbb{E}\left| \mu^{N}_{s,s+t}(f) - \frac{\mu^{N}_{s,s+1+t-t_0} Q_{s+1+t-t_0,s+t}
    (f)}{\mu^{N}_{s,s+1+t-t_0} Q_{s+1+t-t_0,s+t}(\mathbf{1}_D)} \right|\leq
 \frac{\epsilon}{2} +2 \frac{1}{1-\frac{4(1+\sqrt{2})}{\epsilon \sqrt{N} \beta_{\epsilon}}}\, \frac{\epsilon}{8}.
\end{equation*}
By Theorem~\ref{thm:strong-mixing} applied to the initial distributions $\mu^{N}_{s,s+1+t-t_0}$ and $\mu^N_{s,s} Q_{s,s+1+t-t_0}$, we also have
\begin{equation*}
\mathbb{E}\left|
  \frac{\mu^{N}_{s,s+1+t-t_0} Q_{s+1+t-t_0,s+t} (f)}{\mu^N_{s,s+1+t-t_0}
    Q_{s+1+t-t_0,s+t}(\mathbf{1}_D)} - \frac{\mu^N_{s,s} Q_{s,s+t} (f)}{\mu^{N}_{s,s}
    Q_{s,s+t}(\mathbf{1}_D)}
  \right|
  \leq 2 e^{-\gamma (t_0-1)} = \epsilon/6.
\end{equation*}
We deduce from the two previous inequality that there exists $N_3=N_3({\epsilon})\geq N_2$ such that, $\forall N\geq N_3$,
\begin{equation*}
  \mathbb{E}\left| \mu^{N}_{s,s+t}(f) - \frac{\mu^{N}_{s,s} Q_{s,s+t} (f)}{\mu^{N}_{s,s}
    Q_{s,s+t}(\mathbf{1}_D)} \right| \leq \epsilon.
\end{equation*}

\bi\textit{Conclusion.}\\
Setting $N_{\epsilon}=N_1\vee N_3$, we have proved~\eqref{eq:caracterisation-epsilon},
which concludes the proof of
Theorem~\ref{thm:uniform-convergence}.
\end{proof}

  \section{Proof of the main result}
\label{sec:strong-mixing-property}

 The proof of Theorem
 \ref{thm:strong-mixing} is based on the study of the process conditioned not to be killed before a time horizon $T$ that we let go to infinity : we prove coupling estimates for these processes, uniformly in $T$. More precisely, let us define, for all $0\leq s\leq
 t\leq T$ the linear operator $R_{s,t}^T$ by
 \begin{equation*}
    R_{s,t}^T f(x)=\frac{ Q_{s,t}(f Q_{t,T}\mathbf{1}_D)(x)}{ Q_{s,T}\mathbf{1}_D(x)},
 \end{equation*}
 for all $x\in D$ and any bounded measurable function $f$. Let us
 remark that the value $R_{s,t}^T f(x)$ is the expectation of $f({\cal
   Z}^x_{s,t})$ conditioned to $T<\tau_{\partial}$. Indeed we have
 \begin{eqnarray*}
   \E\left(\left.f({\cal Z}^x_{s,t})\,\right|\, T<\tau_{\partial}\right)
   &=&\frac{\E\left(f({\cal Z}^x_{s,t})\1_{{\cal Z}^x_{s,T}\in D}\right)}{\E\left(T<\tau_{\partial}\right)}\\
   &=&\frac{\E\left(f({\cal Z}^x_{s,t})\E\left(\left.\1_{{\cal Z}^x_{s,T}\in D}\right| {\cal Z}^x_{s,t}\right)\right)}
      {Q_{s,T}\1_D(x)}\\
   &=&\frac{\E\left(f({\cal Z}^x_{s,t}) \E\left(\1_{{\cal Z}^{{\cal Z}^x_{s,t}}_{t,T}\in D}|{\cal Z}^x_{s,t}\right)\right)}
      {Q_{s,T}\1_D(x)},
 \end{eqnarray*}
 by the Markov property. Finally, since $\E\left(\1_{{\cal Z}^{{\cal
       Z}^x_{s,t}}_{s,T}\in D}|{\cal Z}^x_{s,t}\right)=Q_{t,T}\1_D({\cal
   Z}^x_{s,t})$, we get the announced result.

\me  
 For any $T> 0$, the family
 $(R_{s,t}^T)_{0\leq s\leq t\leq T}$ is an evolution operator. Indeed, we have
 for all $0\leq u\leq s\leq t\leq T$
 \begin{equation*}
   R_{u,s}^T (R_{s,t}^T f)(x) = \frac{ Q_{u,s}(R_{s,t}^T f Q_{s,T}\mathbf{1}_D)(x)}{Q_{u,T}\mathbf{1}_D(x)},
 \end{equation*}
 where, for all $y\in D$,
 \begin{equation*}
   R_{s,t}^T f(y) Q_{s,T}\mathbf{1}_D(y) = Q_{s,t}(f Q_{t,T}\mathbf{1}_D)(y),
 \end{equation*}
 then
 \begin{eqnarray*}
    R_{u,s}^T R_{s,t}^T f(x) &=& \frac{ Q_{u,s}(Q_{s,t}(f
     Q_{t,T}\mathbf{1}_D))(x)}{ Q_{u,T}\mathbf{1}_D(x)}\\
                             &=& \frac{Q_{u,t}(f
     Q_{t,T}\mathbf{1}_D)(x)}{ Q_{u,T}\mathbf{1}_D(x)}= R_{u,t}^T f(x),
 \end{eqnarray*}
 where we have used that $(Q_{s,t})_{s\leq t}$ is an evolution operator.

 In order to prove the exponential mixing property of Theorem
 \ref{thm:strong-mixing}, we need the following lemma, whose proof
 is postponed to the end of this section (see Subsection~\ref{sec:proof-of-key-lemma}). Its proof is based on a coupling construction and on a gradient estimate both obtained in~\cite{Priola2006}. We will also use a non-degeneracy to the boundary result for the conditional distribution proved in~\cite{Villemonais2013}.
 \begin{lemme}
   \label{lem:strong-mixing-for-R}
   There exists a positive constant $\beta'>0$ and a family of probability measures $(\eta^{x_1,x_2}_{s})_{s,x_1,x_2}$  such that, for all $0\leq
   s\leq T-\Pi-2$, we have
   \begin{equation*}
      R_{s,s+1}^T f(x_i) \geq \beta' \eta^{x_1,x_2}_{s}(f),\ i=1,2,
   \end{equation*}
   for all $(x_1,x_2)\in D\times D$ and any non-negative measurable function $f$.
 \end{lemme}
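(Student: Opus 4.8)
The plan is to use the evolution property of $(R_{s,t}^T)$ just established and to split the unit step as $R_{s,s+1}^T = R_{s,s+1/2}^T\,R_{s+1/2,s+1}^T$, treating the two half-steps in completely different ways. The first half-step will be used to push the process conditioned to survive up to $T$ away from the boundary $\partial D$; once it sits in a fixed interior region, the second half-step will be minorised by a \emph{fixed} probability measure. This in fact yields slightly more than the statement, with $\eta_s^{x_1,x_2}=\nu$ independent of $x_1,x_2$ (which of course implies the Lemma); equivalently, the same estimate can be recast as a coalescent coupling of the two conditioned processes, for which $\eta_s^{x_1,x_2}$ is the normalised law of the coalesced value.

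\emph{Step 1 (leaving the boundary).} As observed above, $R_{s,s+1/2}^T(x,\cdot)$ is the law of ${\cal Z}^x_{s,s+1/2}$ conditioned on $\{T<\tau_{\partial}\}$. I would invoke the non-degeneracy-to-the-boundary estimate of~\cite{Villemonais2013} — which is proved for diffusions carrying a soft killing — to obtain $\alpha>0$ and $p_0>0$ (uniform over the relevant ranges of $s$ and $T$, uniformity in time being guaranteed by the $\Pi$-periodicity of $\sigma$, $b$ and $\kappa$) such that, writing $K:=\{x\in D:\ \phi_D(x)\ge\alpha\}$,
\[
R_{s,s+1/2}^T(x,K)=\P\!\left({\cal Z}^x_{s,s+1/2}\in K\ \middle|\ T<\tau_{\partial}\right)\ \geq\ p_0,\qquad \forall x\in D,\ 0\leq s\leq T-\Pi-2.
\]
The one point requiring care is that the tightness statement of~\cite{Villemonais2013} is phrased for conditioning on survival up to the observation time, while here one conditions on survival up to the later time $T$; bridging the two needs the control of the ratio of survival probabilities recorded in Step 2.

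\emph{Step 2 (minorisation from the interior).} I would show that there are $c_1>0$ and a probability measure $\nu$, carried by a fixed compact subset of $D$, with
\[
R_{s+1/2,s+1}^T(z,\cdot)\ \geq\ c_1\,\nu(\cdot),\qquad \forall z\in K,\ 0\leq s\leq T-\Pi-2.
\]
Writing $R_{s+1/2,s+1}^T(z,dy)=q_{s+1/2,s+1}(z,dy)\,Q_{s+1,T}\mathbf{1}_D(y)\big/Q_{s+1/2,T}\mathbf{1}_D(z)$, where $q_{s+1/2,s+1}$ is the sub-Markov transition kernel of the killed diffusion, there are two ingredients. First, since $z$ stays at distance $\geq\alpha$ from $\partial D$, the ellipticity assumption~(H)(1) and the uniform bounds on the coefficients and on $\kappa$ yield, via the coupling construction and the gradient estimate of~\cite{Priola2006} for the diffusion with soft killing, a lower bound $q_{s+1/2,s+1}(z,dy)\geq c\,\mathbf{1}_K(y)\,dy$ uniform in $z\in K$ and in $s\geq 0$. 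Second, one needs $Q_{s+1,T}\mathbf{1}_D(y)\big/Q_{s+1/2,T}\mathbf{1}_D(z)\geq c'$ for $y,z\in K$: bounding $Q_{s+1/2,T}\mathbf{1}_D(z)=\mathbb{E}\big[\mathbf{1}_{s+1<\tau_\partial}\,Q_{s+1,T}\mathbf{1}_D({\cal Z}^z_{s+1/2,s+1})\big]$ from above by conditioning at time $s+1$ and splitting according to whether the position lies in $K$ or in the shell $D^\alpha$, one controls the contribution of $K$ by an interior parabolic Harnack inequality for the non-negative function $(t,y)\mapsto Q_{t,T}\mathbf{1}_D(y)$ (valid for merely bounded measurable uniformly elliptic coefficients) and the contribution of $D^\alpha$ by the non-degeneracy estimate of~\cite{Villemonais2013} again; the lower bound $\inf_K Q_{s+1,T}\mathbf{1}_D\geq c''\,\|Q_{s+1/2,T}\mathbf{1}_D\|_\infty$ is of the same nature.

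\emph{Conclusion and main difficulty.} Putting the two steps together, for every $x\in D$ and every non-negative measurable $f$,
\[
R_{s,s+1}^T f(x)=\int_D R_{s+1/2,s+1}^T f(z)\,R_{s,s+1/2}^T(x,dz)\ \geq\ \int_K R_{s+1/2,s+1}^T f(z)\,R_{s,s+1/2}^T(x,dz)\ \geq\ c_1\,\nu(f)\,R_{s,s+1/2}^T(x,K)\ \geq\ p_0\,c_1\,\nu(f),
\]
so the Lemma holds with $\beta'=p_0c_1$ and $\eta_s^{x_1,x_2}=\nu$. The real difficulty pervading every estimate above is uniformity in the horizon $T$: since $Q_{s,T}\mathbf{1}_D$ decays to $0$ as $T\to\infty$, no crude bound is ever affordable and one must systematically reason with \emph{ratios} of survival probabilities; the crux is the comparison, uniform in $T$, between the probability of surviving up to $T$ starting from an arbitrary interior point and starting from an arbitrary point of $D$ after the latter has been given time $1/2$ to escape $D^\alpha$. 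It is precisely this that the interplay of the Priola--Wang couplings and gradient estimates, the Villemonais non-degeneracy estimate, and the $\Pi$-periodicity is designed to deliver.
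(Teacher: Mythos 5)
Your high-level plan (minorise the conditioned one-step kernel and pay the $T$-dependence with ratios of survival probabilities, using the Priola--Wang coupling and the Villemonais non-degeneracy estimate) is in the spirit of the paper, but the two estimates on which everything rests are exactly where the whole difficulty of the lemma is concentrated, and you leave both as assertions. Concretely, your Step 2 needs $\inf_{y\in K} Q_{s+1,T}\1_D(y)\geq c''\,\|Q_{s+1/2,T}\1_D\|_\infty$ uniformly in $s$ and $T$, and your Step 1, as you acknowledge, needs the same kind of bound to pass from conditioning at the observation time (which is what \cite{Villemonais2013} gives) to conditioning at the later time $T$. The tools you invoke do not deliver it: an interior parabolic Harnack inequality for $u(t,y)=Q_{t,T}\1_D(y)$, read in the forward variable $r=T-t$, bounds values with the \emph{longer} remaining horizon below by values with the \emph{shorter} one, i.e.\ it yields $\sup_K Q_{s+1,T}\1_D\leq C\inf_K Q_{s+1/2,T}\1_D$, which is the reverse of the comparison you need (and a same-time spatial Harnack inequality is simply not available for parabolic equations); the Villemonais estimate, for its part, compares survival up to $t$ with the position at time $t$ and says nothing by itself about sup-norms of survival probabilities across different starting times. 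In the paper this missing comparison is supplied by Lemma~\ref{lem:Q-s-t-1-Q-s-t-1-max}: the set where $Q_{\cdot,T}\1_D$ is at least half its maximum contains a ball of fixed radius $r_0$, proved via the Priola--Wang Lipschitz bound $|Q_{s,t}\1_D(y_1)-Q_{s,t}\1_D(y_2)|\leq C|y_1-y_2|\,\|Q_{s+\Pi,t}\1_D\|_\infty$ combined with the pointwise inequality $Q_{s+\Pi,t}\1_D\leq \frac{2}{c_{\alpha_0}}Q_{s+\Pi,t+\Pi}\1_D$ (Villemonais applied at the terminal time) and the $\Pi$-periodicity, which identifies $\|Q_{s+\Pi,t+\Pi}\1_D\|_\infty=\|Q_{s,t}\1_D\|_\infty$. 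Your sketch never uses periodicity in this substantive way (you mention it only as a vague guarantee of uniformity), and without such a localisation of the near-maximal set your ratio bound --- hence the fixed measure $\nu$ of Step 2 and the constant $p_0$ of Step 1 --- is unsupported. This is a genuine gap, not a detail.

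Two further points of comparison. First, the interior lower bound $q_{s+1/2,s+1}(z,dy)\geq c\,\1_K(y)\,dy$ is not what \cite{Priola2006} provides (their results are couplings and gradient estimates, i.e.\ upper bounds on differences, not lower bounds on kernels); such a bound would need a Krylov--Safonov-type argument, whereas the paper deliberately avoids any density lower bound: it uses the coupling to extract a common component $\mu_s^{y_1,y_2}$ of the two laws started in a small interior ball (Lemma~\ref{lem:existence-of-mu-s}), then handles the horizon $T$ by the trivial bound $Q_{s,T}\1_D(x)\leq Q_{s,s+1}\1_D(x)\,\|Q_{s+1,T}\1_D\|_\infty$ together with a restriction of the minorising measure to the ball of Lemma~\ref{lem:Q-s-t-1-Q-s-t-1-max}; its measure $\eta_s^{x_1,x_2}$ consequently depends on $(x_1,x_2,s,T)$, which is all the lemma requires. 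Second, your claim of a minorising measure $\nu$ independent of $x_1,x_2,s,T$ is a strictly stronger (Doeblin-type) statement; it may well be true under (H), but establishing it requires precisely the uniform-in-$T$ comparison you have deferred, so as written the proposal does not constitute a proof.
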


 \bi For any orthogonal probability measures $\mu_1,\mu_2$ on
 $D$, we have 
 \begin{align*}
   \|\mu_1 R_{s,s+1}^T-\mu_2 R_{s,s+1}^T \|_{TV} 
    &=\sup_{f\in {\cal B}_1(D)} |\mu_1 R_{s,s+1}^T(f)-\mu_2 R_{s,s+1}^T(f)|\\
    \leq& \sup_{f\in {\cal B}_1(D)}
   \int_{D\times D} \left|R_{s,s+1}^T f(x) -R_{s,s+1}^T f(y)\right|\ (\mu_1\otimes\mu_2)(dx,dy),
 \end{align*}
 where ${\cal B}_1(D)$ denotes the set of measurable functions $f$
 such that $\|f\|_{\infty}\leq 1$, and $\|\cdot\|_{TV}$ the total
 variation norm for signed measures.  For any $x,y\in D\times D$ and any $f\in{\cal B}_1(D)$, we have, for all $s\leq T-\Pi-2$,
 \begin{align*}
   \left|R_{s,s+1}^T f(x) -R_{s,s+1}^T f(y)\right|
      &=\left|\left(R_{s,s+1}^T f(x)-\beta'\eta_s^{x,y}(f)\right)
                                        -\left( R_{s,s+1}^T f(y)-\beta'\eta_s^{x,y}(f)\right)\right|\\
	  &\leq \left|R_{s,s+1}^T f(x)-\beta'\eta_s^{x,y}(f)\right|
                                        +\left| R_{s,s+1}^T f(y)-\beta'\eta_s^{x,y}(f)\right|\\
 \end{align*}
But, by Lemma~\ref{lem:strong-mixing-for-R}, $\delta_x R_{s,s+1}^T-\beta'\eta_s^{x,y}$ is a non-negative measure with total mass smaller than $1-\beta'$, so that
\begin{align*}
0\leq \left|R_{s,s+1}^T f(x)-\beta'\eta_s^{x,y}(f)\right|\leq 1-\beta'.
\end{align*}
We obtain the same inequality for $\left| R_{s,s+1}^T f(y)-\beta'\eta_s^{x,y}(f)\right|$, so that
\begin{align*}
\left|R_{s,s+1}^T f(x) -R_{s,s+1}^T f(y)\right|&\leq 2(1-\beta').
\end{align*}
 Since $\mu_1$ and $\mu_2$ are assumed to be orthogonal probability
 measures, we have $\|\mu_1-\mu_2\|_{TV}=2$, so that
 \begin{equation*}
   \|\mu_1 R_{s,s+1}^T-\mu_2 R_{s,s+1}^T \|_{TV}\leq (1-\beta') \|\mu_1-\mu_2\|_{TV}.
 \end{equation*}
 If $\mu_1$ and $\mu_2$ are two different but not orthogonal
 probability measures, one can apply the previous result to the
 orthogonal probability measures
 $\frac{(\mu_1-\mu_2)_+}{(\mu_1-\mu_2)_+(D)}$ and
 $\frac{(\mu_1-\mu_2)_-}{(\mu_1-\mu_2)_-(D)}$. Then
 \begin{multline*}
   \left\|\frac{(\mu_1-\mu_2)_+}{(\mu_1-\mu_2)_+(D)} R_{s,s+1}^T-\frac{(\mu_1-\mu_2)_-}{(\mu_1-\mu_2)_-(D)} R_{s,s+1}^T \right\|_{TV}\\
   \leq (1-\beta') \left\|\frac{(\mu_1-\mu_2)_+}{(\mu_1-\mu_2)_+(D)}-\frac{(\mu_1-\mu_2)_-}{(\mu_1-\mu_2)_-(D)}\right\|_{TV}.
 \end{multline*}
But
 $(\mu_1-\mu_2)_+(D)=(\mu_1-\mu_2)_-(D)$ since $\mu_1(D)=\mu_2(D)=1$, then,
 multiplying the obtained inequality by $(\mu_1-\mu_2)_+(D)$, we
 deduce that
 \begin{eqnarray*}
   \|(\mu_1-\mu_2)_+ R_{s,s+1}^T-(\mu_1-\mu_2)_- R_{s,s+1}^T \|_{TV}
   &\leq& (1-\beta') \|(\mu_1-\mu_2)_+-(\mu_1-\mu_2)_-\|_{TV}.
 \end{eqnarray*}
But $(\mu_1-\mu_2)_+-(\mu_1-\mu_2)_-=\mu_1-\mu_2$, so that
 \begin{equation*}
   \|\mu_1 R_{s,s+1}^T-\mu_2 R_{s,s+1}^T \|_{TV}\leq (1-\beta') \|\mu_1-\mu_2\|_{TV}.
 \end{equation*}
 In particular, using the evolution operator property of $(R_{s,t}^T)_{s,t}$, we deduce that
 $$
 \begin{array}{l}
   \|\delta_x R_{0,T-\Pi-2}^T - \delta_y R_{0,T-\Pi-2}^T\|_{TV}\\
   \\
   =\left\|\delta_x R^T_{0,T-\Pi-3} R_{T-\Pi-3,T-\Pi-2}^T - \delta_y R_{0,T-\Pi-3}^T R_{T-\Pi-3,T-\Pi-2}^T\right\|_{TV}\\
        \\   
        \leq (1-\beta')\|\delta_x R_{0,T-\Pi-3}^T - \delta_y R_{0,T-\Pi-3}^T\|_{TV}
           \leq 2 (1-\beta')^{[T-\Pi-2]},
 \end{array}
 $$
 where $[T-\Pi-2]$ denotes the integer part of
 $T-\Pi-2$. Theorem~\ref{thm:strong-mixing} is thus proved for any pair
 of probability measures $(\delta_{x},\delta_y)$, with $(x,y)\in
 D\times D$, for a good choice of $C$ and $\gamma$ which are now assumed to be fixed.

 \bigskip \noindent Let $\mu$ be a probability measure on $D$ and $y\in D$. We have
 \begin{align*}
   \left|\mu Q_{0,T}(f)-\mu Q_{0,T}(\mathbf{1}_D)\frac{\delta_y Q_{0,T}(f)}{\delta_y
     Q_{0,T}(\mathbf{1}_D)}\right|
   &=\left|\int_D Q_{0,T} f(x)
    - \delta_x Q_{0,T}(\mathbf{1}_D)\frac{\delta_y Q_{0,T}(f)}{\delta_y
     Q_{0,T}(\mathbf{1}_D)}d\mu(x)\right|\\
    &\leq \int_D C e^{-\gamma T} \delta_x Q_{0,T}(\mathbf{1}_D) d\mu(x),
 \end{align*}
 by Theorem~\ref{thm:strong-mixing} for Dirac initial measures that we just proved. Dividing by ${\mu
   Q_{0,T}(\mathbf{1}_D)}=\int_D \delta_x Q_{0,T}(\mathbf{1}_D)
 d\mu(x)$, we deduce that
 \begin{equation*}
   \left|\frac{\mu Q_{0,T}(f)}{\mu Q_{0,T}(\mathbf{1}_D)}-\frac{\delta_y Q_{0,T}(f)}{\delta_y
     Q_{0,T}(\mathbf{1}_D)}\right|\leq C e^{-\gamma T},
 \end{equation*}
 for any $f\in{\cal B}_1$. The same procedure, replacing $\delta_y$ by
 any probability measure, leads us to
 Theorem~\ref{thm:strong-mixing}.

  \subsection{Proof of Lemma~\ref{lem:strong-mixing-for-R}}
  \label{sec:proof-of-key-lemma}
 In Subsection~\ref{subsec:coupling}, we present a
  coupling construction for multi-dimensional time-inhomo\-geneous diffusion processes obtained in~\cite{Priola2006}. In
  Subsection~\ref{subsec:intermediate-results}, we derive from
  this coupling and from the non-degeneracy result for conditional distribution obtained in~\cite{Villemonais2013} two intermediate results which are key steps for
  the proof of Lemma~\ref{lem:strong-mixing-for-R}. The first result
  (Lemma~\ref{lem:Q-s-t-1-Q-s-t-1-max}) concerns the existence of a
path $(x_{s,t})_{0\leq s\leq t}$ and a constant $r_0>0$ such that
 \begin{equation*}
   \inf_{x\in B(x_{s,t},r_0)} Q_{s,t}\mathbf{1}_D(x)\geq
   \frac{1}{2}\|Q_{s,t}\mathbf{1}_{D}\|_{\infty},\ \forall 0\leq s+\Pi+1\leq t,
 \end{equation*}
 where we recall that $\Pi$ is the time-period of the coefficients $\sigma$ and $b$.
 The second result (Lemma~\ref{lem:existence-of-mu-s}) states the
 existence of a constant $\beta>0$ and a family of probability
 measures $(\nu^{x_1,x_2}_s)_{s\geq 0,(x_1,x_2)\in D\times D}$ such
 that, for all $s\geq 0$, all $(x_1,x_2)\in D\times D$ and any
 non-negative measurable function $f$,
 \begin{equation*}
  \frac{Q_{s,s+1} f(x_i)}{Q_{s,s+1}\mathbf{1}_D(x_i)} \geq \beta \nu^{x_1,x_2}_s(f), \ i=1,2.
 \end{equation*}
We conclude the proof of Lemma~\ref{lem:strong-mixing-for-R} in
Subsection~\ref{subsec:conclusion-of-the-proof}, showing that
Lemma~\ref{lem:Q-s-t-1-Q-s-t-1-max} and
Lemma~\ref{lem:existence-of-mu-s} imply a uniform coupling rate for the process conditioned not be killed up to an arbitrary time horizon $T>0$.

 \subsubsection{Coupling}
 \label{subsec:coupling}
 In the following proposition, we state the existence of a coupling for multi-dimensional
 time-inhomogeneous diffusion processes. The result also provides a bound for the coupling probability that will be very useful in the next subsections.
 \begin{proposition}
   \label{prop:coupling}
   For all $s\geq 0$ and all $(y_1,y_2)\in D\times D$, there exists a
   diffusion process $(Y^1_{s,t},Y^2_{s,t})_{t\geq s}$ such that
   \begin{enumerate}
     \item $(Y^1_{s,t})_{t\geq s}$ has the same law
       as $({\cal Z}^{y_1}_{s,t})_{t\geq s}$,
     \item $(Y^2_{s,t})_{t\geq s}$ has the same law
       as $({\cal Z}^{y_2}_{s,t})_{t\geq s}$;
     \item $Y^1_{s,t}$ and $Y^2_{s,t}$ are equal almost surely after
       the coupling time 
       \begin{equation*}
         T^s_c=\inf\{t\geq 0,\,Y^1_{s,t}=Y^2_{s,t}\},
       \end{equation*}
       where $\inf \emptyset =+\infty$ by convention.
     \item There exists a constant $c>0$ which doesn't depend on $s,t$
       such that
       \begin{equation*}
         \P^{y_1,y_2}(t <\tau^1_{\partial} \vee \tau^2_{\partial}\text{ and }
         T^s_c> t\wedge \tau^1_{\partial}\wedge \tau^2_{\partial})\leq
         \frac{c|y_1-y_2|}{\sqrt{1\wedge(t-s)}},
       \end{equation*}
       where $\tau^1_{\partial}$ and $\tau^2_{\partial}$ denote the
       killing times of $Y^1$ and $Y^2$ respectively.
   \end{enumerate}
 \end{proposition}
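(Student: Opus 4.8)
The plan is to construct the coupling at the level of the stochastic differential equation \eqref{eq:the-eds}, ignoring killing, and then to graft the soft and hard killing mechanisms onto it. The core input is the mirror (reflection) coupling of Priola and Wang \cite{Priola2006}: for the diffusion \eqref{eq:the-eds} started from $y_1$ and from $y_2$ at time $s$, the first point of Assumption~(H) (ellipticity) together with the uniform Lipschitz bound \eqref{eq:assumption1} produces a pair $(\widetilde Y^1_{s,t},\widetilde Y^2_{s,t})_{t\ge s}$ of killing-free solutions with the correct marginal laws, driven by Brownian increments that are mirror images of one another across the hyperplane orthogonal to $\widetilde Y^1_{s,t}-\widetilde Y^2_{s,t}$ until the meeting time $\widetilde T_c=\inf\{u\ge s:\ \widetilde Y^1_{s,u}=\widetilde Y^2_{s,u}\}$, and moving together afterwards. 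The same construction yields the quantitative bound $\P(\widetilde T_c>u)=\P(\widetilde Y^1_{s,u}\ne\widetilde Y^2_{s,u})\le c|y_1-y_2|/\sqrt{1\wedge(u-s)}$, which is the probabilistic form of the gradient estimate of \cite{Priola2006}: writing $\rho_u=|\widetilde Y^1_{s,u}-\widetilde Y^2_{s,u}|$, the reflection makes $\rho$ solve, while positive, a one-dimensional SDE with bounded drift and with diffusion coefficient bounded below near $0$ (by a multiple of $c_0$, via the first point of Assumption~(H)), so that $0$ is reached quickly from small values and one concludes by comparison.

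Next I would attach the killing. Set $\tau^i_D=\inf\{u\ge s:\ \widetilde Y^i_{s,u}\notin D\}$ for the hard killing, and for the soft killing use a \emph{single} exponential threshold $\mathbf e$: with $A^i_u=\int_s^u\kappa(r,\widetilde Y^i_{s,r})\,dr$, define the soft-killing times from $\mathbf e$, re-drawing a fresh common threshold at time $\widetilde T_c$ if neither process has been killed yet; by the memoryless property this does not affect the marginal laws. Let $\tau^i_\partial$ be the minimum of $\tau^i_D$ and the $i$-th soft-killing time, and put $Y^i_{s,u}=\widetilde Y^i_{s,u}$ for $u<\tau^i_\partial$, $Y^i_{s,u}=\partial$ afterwards. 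Properties~1 and~2 hold because each marginal is a solution of \eqref{eq:the-eds} with exactly the prescribed killing. Property~3 holds because, when $Y^1_{s,u}=Y^2_{s,u}$, either both processes are already at $\partial$ (and stay there), or both sit at a common point of $D$, in which case neither has been killed, hence $u\ge\widetilde T_c$, the two paths then coincide, they leave $D$ simultaneously, and the common soft clock (reset at $\widetilde T_c$) kills them simultaneously.

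The substance is Property~4. Write $E_t$ for the event in the statement and $\sigma=\tau^1_\partial\wedge\tau^2_\partial$. On $E_t$ no meeting occurs before $t\wedge\sigma$, and since the $Y$'s agree with the killing-free $\widetilde Y$'s up to the first killing, this forces $\widetilde T_c>t\wedge\sigma$; moreover at least one process is alive at $t$. If both are alive at $t$ then $\widetilde T_c>t$ and the claimed bound follows directly from the estimate on $\widetilde T_c$. Otherwise exactly one, say $Y^1$, is alive; had the two diffusions met before the first killing, the coupled killing mechanisms would have killed them together, contradicting $\tau^2_\partial<\tau^1_\partial$, so this case is exactly $\{\tau^2_\partial\le t<\tau^1_\partial\}$. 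Splitting $\tau^2_\partial$ according to whether the soft clock or the exit from $D$ came first: in the soft case, the first soft clock to ring must ring before $\widetilde T_c$, and since $\kappa$ is bounded this has probability at most $\|\kappa\|_\infty\,\E[(\widetilde T_c-s)\wedge 1]+\P(\widetilde T_c>s+1)$; using $\E[(\widetilde T_c-s)\wedge 1]=\int_0^1\P(\widetilde T_c-s>v)\,dv\le\int_0^1\min(1,\,c|y_1-y_2|/\sqrt v)\,dv\le 2c|y_1-y_2|$ together with the estimate on $\widetilde T_c$, this is of the required order (the case $t-s<1$ being only easier).

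The remaining contribution — the hard case, where $\widetilde Y^2$ exits $D$ before meeting $\widetilde Y^1$ while $\widetilde Y^1$ stays in $D$ up to time $t$ — is the genuinely delicate one, and I expect it to be the main obstacle, since it is precisely the interaction between the singular hard killing and the coalescence that the reflection coupling does not directly see. I would handle it by noting that $\widetilde Y^2_{s,\tau^2_D}\in\partial D$ and $\widetilde Y^1_{s,\tau^2_D}\in D$ force $\phi_D(\widetilde Y^1_{s,\tau^2_D})\le|\widetilde Y^1_{s,\tau^2_D}-\widetilde Y^2_{s,\tau^2_D}|=\rho_{\tau^2_D}$, and then combining the strong Markov property at $\tau^2_D$ with the reflection-coupling control of the distance $\rho$ (the diffusion coalescing quickly from small values, uniformly in position) to bound this probability as well by a multiple of $|y_1-y_2|/\sqrt{1\wedge(t-s)}$; the non-degeneracy-to-the-boundary estimate of \cite{Villemonais2013} may be invoked here to keep the argument uniform. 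Adding the three contributions and a symmetric factor $2$ for interchanging the roles of $Y^1$ and $Y^2$ then yields Property~4 with a suitable constant~$c$.
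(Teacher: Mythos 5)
Your decomposition of item~4 into ``both alive'', ``soft killing before coupling'' and ``hard killing before coupling'' is sound, and the first two cases are handled correctly granted the tail bound $\P(\widetilde T_c-s>u)\le c|y_1-y_2|/\sqrt{1\wedge u}$ for the killing-free coupling; the common re-drawn exponential threshold for the soft killing is also a legitimate way to get properties 1--3. But the case you yourself flag as ``the genuinely delicate one'' is exactly the content of item~4, and your sketch of it does not close. After the strong Markov property at $\tau^2_D$ you need a pointwise survival estimate of the type $Q_{r,t}\1_D(x)\le C\,\phi_D(x)/\sqrt{1\wedge(t-r)}$, which is a boundary gradient-type estimate of precisely the same nature as the inequality being proved; the result of \cite{Villemonais2013} that you invoke is a tightness/non-degeneracy statement for conditional distributions and gives nothing of this pointwise form. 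You also need to integrate $\min\bigl(1,\rho_{\tau^2_D}/\sqrt{t-\tau^2_D}\,\bigr)$ against the joint law of $(\rho_{\tau^2_D},\tau^2_D)$ on $\{\tau^2_D<\widetilde T_c\}$, and the only control on $\rho$ your construction provides (drift bounded by a multiple of $\rho$) gives $\E\,\rho_u\le e^{Cu}|y_1-y_2|$, which is useless over long horizons $t-s$. This is exactly the point where the paper does \emph{not} argue from the killing-free coupling: it takes the coupling and the estimate of item~4 directly from Priola and Wang \cite{Priola2006}, whose bound is proved for the \emph{killed} (Dirichlet) semigroup via carefully constructed supermartingale functionals (their $F_{\delta,\varepsilon}$ and $h_{\varepsilon}$, see their p.~261); the paper's own contribution to this proposition is only to check that time-inhomogeneity plays no role in that construction and that in the present setting $F_{\delta,\varepsilon}(\rho)\le a\rho$ and $h_{\varepsilon}(\rho)\le b\rho$. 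So the hard-killing interplay you try to re-derive is the part that must be (and in the paper is) imported wholesale, and in your proposal it remains unproved.

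A secondary problem lies in the coupling construction itself. With $\sigma$ merely elliptic and Lipschitz, and not necessarily symmetric, the plain mirror coupling you describe need not have a radial diffusion coefficient bounded below: if, say, $\sigma$ is a rotation matrix then $\sigma\sigma^*=I$ while $e^*\sigma^* e=0$, and the distance process can degenerate, so ``$\rho$ solves a one-dimensional SDE with diffusion bounded below by a multiple of $c_0$'' is not automatic. This is why \cite{Priola2006}, and the paper following them, use the coupling generator $L_t$ with cross term $C_t(x,y)=\lambda_0(I-2uu^*)+\sigma_0(t,x)\sigma_0(t,y)^*$, where $\sigma_0=\sqrt{\sigma\sigma^*-\lambda_0 I}$ is the symmetric square root, and construct the pair as a (possibly non-unique) solution of the associated martingale problem. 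Your argument can be repaired on this point by first replacing $\sigma$ by the symmetric square root of $\sigma\sigma^*$ (which does not change the law) or by adopting the Priola--Wang coupling outright, but as written the claimed tail bound for $\widetilde T_c$ is not justified for general $\sigma$ satisfying only Assumption~(H).
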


   The proof of Proposition~\ref{prop:coupling} is given in~\cite{Priola2006} for time-homogeneous diffusion processes, though a careful check of the arguments shows that the authors do not use the time-homogeneity of the coefficients to derive the existence and other properties of the coupling. 
   We do not write the proof
   in details, but we recall the idea behind the coupling
   construction. The proof of the $4^{th}$ statement of
   Proposition~\ref{prop:coupling} requires fine
   estimates and calculus which are plainly detailed
   in~\cite{Priola2006}. More precisely, the authors obtain this result on page~261. Note that the result is a little bit hidden but is easily obtained by straightforward computations. Indeed, one checks that, in our situation (and using the notation of the cited article), there exist constants $a,b>0$ such that $F_{\delta,\varepsilon}(\rho(x,y))\leq a\rho(x,y)$ and $h_{\varepsilon}(\rho(x,y))\leq b\rho(x,y)$ in equation~\cite[(4.4)]{Priola2006}.

   Let us nom briefly present the construction of the coupling. By Assumption~(H), there exists
   $\lambda_0>0$ such that $\sigma\sigma^*-\lambda_0 I$ is definite
   positive for all $t,x$. Let
   $\sigma_0:=\sqrt{\sigma\sigma^*-\lambda_0 I}$ be the unique
   symmetric definite positive matrix function such that
   $\sigma_0^2=\sigma\sigma^*-\lambda_0 I$. Without loss of
   generality, one can choose $\lambda_0$ small enough so that
   $\sigma_0$ is uniformly positive definite.  We define
   \begin{equation*}
     u(x,y)=\frac{k(|x-y|)(x-y)}{(k(|x-y|)+1)|x-y|}\text{ and
     }C_t(x,y)=\lambda_0\left(I-2
     u(x,y)u(x,y)^*\right)+\sigma_0(t,x)\sigma_0(t,y)^*,
   \end{equation*}
   where $k(r)=((k_0+1)^2 r^2/2\,\vee\,r)^{\frac{1}{4}}$, where $k_0>0$ is taken from~\eqref{eq:assumption1}.  Before the
   coupling time, the coupling process is
   generated by
   \begin{multline*}
     L_t(x,y)=\frac{1}{2}\sum_{i,j=1}^{d}\left\{
     [\sigma(t,x)\sigma(t,x)^*]_{i,j}\frac{\partial^2}{\partial
       x_i\partial x_j} +
     [\sigma(t,y)\sigma(t,y)^*]_{i,j}\frac{\partial^2}{\partial
       y_i\partial y_j}\right.\\
    \left. + 2 [C_t(x,y)]_{i,j}\frac{\partial^2}{\partial
       x_i\partial y_j} \right\}
     +\sum_{i=1}^d\left\{
     (b(t,x))_i\frac{\partial}{\partial x_i} +
     (b(t,y))_i\frac{\partial}{\partial y_i} \right\}.
   \end{multline*}

   \me The coefficients of $L_t$ are continuous and bounded over
   $\mathds{R}^d$, then, for all $s\geq 0$ and any initial position $x=(y_1,y_2)\in D\times
   D$, there exists a not necessarily unique process
   $(X^{x}_{s,t})_{t\geq 0}$ with values in $\mathds{R}^{2d}$ to the
   martingale problem associated with $(L_t)_{t\geq s}$ (see
   \cite[Theorem 2.2, Chapter IV]{Ikeda1989}). We define $Y'^1$ and $Y'^2$ as the two marginal components of $X^x$, so that 
   $X^{x}_{s,t}=(Y'^1_{s,t},Y'^2_{s,t})$ almost surely. We consider the coupling
   time $T'^s_c$ of $X^{x}_{s,\cdot}$, which is defined by
   \begin{equation*}
     T'^s_c=\inf\{t\geq s,\text{ such that }Y'^1_{s,t}=Y'^2_{s,t}\}.
   \end{equation*}
    We define $Y^1$ and $Y^2$ as follows:
   \begin{equation*}
     Y^i_t=\left\lbrace
     \begin{array}{l}
       Y'^i_t,\ t\leq T'^s_c,\\
       Y'^1_t,\ t>T'^s_c.
     \end{array}
     \right.
   \end{equation*}
   Moreover, each marginal process $Y^{i}$, $i=1,2$, is killed either
   when it hits the boundary $\partial D$ or with a rate $\kappa$.

   \subsubsection{Intermediate results}
   \label{subsec:intermediate-results}
   In this section, we prove Lemmas~\ref{lem:Q-s-t-1-Q-s-t-1-max} and~\ref{lem:existence-of-mu-s}, which are
   essential part of the proof of
   Theorem~\ref{thm:strong-mixing}. We recall that $\Pi$
   denotes the time-period of the coefficients of the SDE~\eqref{eq:the-eds}. Note also that
   Lemma~\ref{lem:Q-s-t-1-Q-s-t-1-max} is the only part of the paper which makes use of
   the periodicity assumption.
   
   \begin{lemme}
     \label{lem:Q-s-t-1-Q-s-t-1-max}
     For any $0\leq s<t$, let us denote by $x_{s,t}\in D$ the point at which
     $Q_{s,t}\mathbf{1}_{D}$ is maximal.  There exists a positive
     constant $r_0>0$ (independent of $s$ and $t$) such that
     \begin{equation*}
       \inf_{x\in B(x_{s,t},r_0)} Q_{s,t}\mathbf{1}_D(x)\geq
       \frac{1}{2}\|Q_{s,t}\mathbf{1}_{D}\|_{\infty},\ \forall 0\leq s\leq s+ \Pi+1\leq t,
     \end{equation*}
     where $B(x_{s,t},r_0)$ denotes the
     ball of radius $r_0$ centred on $x_{s,t}$.
   \end{lemme}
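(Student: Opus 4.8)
The plan is to establish a uniform modulus-of-continuity (Hölder or Lipschitz) estimate for the map $x\mapsto Q_{s,t}\mathbf 1_D(x)$ near its maximum point $x_{s,t}$, with a constant that does not depend on $s$ or $t$, and then to combine this with a lower bound on $\|Q_{s,t}\mathbf 1_D\|_\infty$ that is itself uniform in $s,t$ once $t-s\geq \Pi+1$. Given such estimates, the conclusion is immediate: if $|Q_{s,t}\mathbf 1_D(x)-Q_{s,t}\mathbf 1_D(x_{s,t})|\leq L|x-x_{s,t}|$ for all $x$, and $\|Q_{s,t}\mathbf 1_D\|_\infty\geq m_0>0$, then picking $r_0 = m_0/(2L)$ gives $Q_{s,t}\mathbf 1_D(x)\geq \tfrac12\|Q_{s,t}\mathbf 1_D\|_\infty$ on $B(x_{s,t},r_0)$. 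So the two pillars are: (i) a uniform regularity estimate, and (ii) a uniform positivity estimate.

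For pillar (i), I would invoke the coupling of Proposition~\ref{prop:coupling}. Fix $0\le s\le t$ and $x,y\in D$. Since $Q_{s,t}\mathbf 1_D(x)=\P^{x,y}(t<\tau^1_\partial)$ and $Q_{s,t}\mathbf 1_D(y)=\P^{x,y}(t<\tau^2_\partial)$ under the coupled law, and since $Y^1=Y^2$ (hence $\tau^1_\partial=\tau^2_\partial$) after the coupling time $T^s_c$, the difference $|Q_{s,t}\mathbf 1_D(x)-Q_{s,t}\mathbf 1_D(y)|$ is bounded by the probability that exactly one of the two marginals survives to time $t$ while they have not yet coupled, which is exactly the event controlled in statement~4 of Proposition~\ref{prop:coupling}. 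Taking $t-s\geq 1$ so that $1\wedge(t-s)=1$, this yields $|Q_{s,t}\mathbf 1_D(x)-Q_{s,t}\mathbf 1_D(y)|\leq c\,|x-y|$ with $c$ independent of $s,t$ — a genuinely uniform Lipschitz bound. (For $t-s<1$ the lemma's hypothesis $t\geq s+\Pi+1\geq s+1$ is not in force, so there is nothing to check there.)

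For pillar (ii), I would use the periodicity of the coefficients and $\kappa$, which is the only place the period $\Pi$ enters. The point is to show $\inf_{s\geq 0}\|Q_{s,t}\mathbf 1_D\|_\infty$ is bounded below by a positive constant whenever $t-s\geq \Pi+1$. Write $t = s + (t-s)$ and split the time interval: over $[s,t-1]$ (which has length $\geq \Pi$, so it contains a full period, up to a shift one can normalize using periodicity) one controls $\|Q_{s,t-1}\mathbf 1_D\|_\infty$ from below by a constant $c_1>0$ depending only on $D$, the ellipticity constant $c_0$, the sup-norms of $b,\sigma,\kappa$ and on $\Pi$ — this is a standard lower bound on survival probability using that the coefficients are bounded and uniformly elliptic on a fixed bounded domain, combined with periodicity to reduce to finitely many ``shapes'' of generator. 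Then over the last unit interval $[t-1,t]$, one uses the semigroup property $Q_{s,t}\mathbf 1_D = Q_{s,t-1}(Q_{t-1,t}\mathbf 1_D)$ together with a uniform lower bound $Q_{t-1,t}\mathbf 1_D(z)\geq c_2>0$ valid for $z$ ranging over a fixed sub-domain compactly contained in $D$ (again: bounded elliptic coefficients, unit time, away from the boundary), and one checks that $Q_{s,t-1}\mathbf 1_D$ puts enough mass on that sub-domain. Combining gives $\|Q_{s,t}\mathbf 1_D\|_\infty\geq c_1 c_2 =: m_0>0$, uniformly.

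The main obstacle is pillar~(ii): extracting a lower bound on the survival probability that is uniform in the \emph{starting time} $s$. This is exactly where periodicity is indispensable — without it one would need Harnack-type inequalities of the form $\|Q_{s,t+\Pi}\mathbf 1_D\|_\infty\leq C\|Q_{s+\Pi,t+\Pi}\mathbf 1_D\|_\infty$ alluded to in the introduction. With periodicity, the family of generators $\{L_u : u\in[0,\Pi]\}$ is, modulo translation, all there is, and a compactness-plus-support-theorem argument (or an explicit barrier/comparison argument on a ball strictly inside $D$) delivers the required constant; I would carry this out by first proving the bound for $s\in[0,\Pi]$ and then transporting it to all $s\geq 0$ via the identity $Q_{s,t}=Q_{s-\Pi,t-\Pi}$ which follows from $\Pi$-periodicity of $\sigma,b,\kappa$. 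The regularity estimate (pillar~(i)) is comparatively routine given Proposition~\ref{prop:coupling}, since the hard analytic work there has already been done in~\cite{Priola2006}.
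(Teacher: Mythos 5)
There is a genuine gap, and it sits exactly at your pillar~(ii): the claimed uniform lower bound $\|Q_{s,t}\mathbf 1_D\|_\infty\geq m_0>0$ for all $t\geq s+\Pi+1$ is false. Because of the hard killing at $\partial D$, the survival probability decays exponentially in the length of the time interval: already in the Brownian case $Q_{s,t}\mathbf 1_D$ is the Dirichlet heat semigroup applied to $\mathbf 1_D$, whose supremum decays like $e^{-\lambda_1(t-s)}$, and the soft killing only makes it smaller. In particular your intermediate claim that $\|Q_{s,t-1}\mathbf 1_D\|_\infty\geq c_1$ with $c_1$ depending only on $D$, the ellipticity constant, the sup-norms of the coefficients and $\Pi$ cannot hold, since the interval $[s,t-1]$ has unbounded length; no compactness-in-$s$ or periodicity argument can repair this, because the decay is in $t-s$, not in $s$. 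This is precisely the difficulty emphasized in the introduction of the paper ($\mu Q_{s,t}(\mathbf 1_D)\to 0$). Consequently your pillar~(i), although correct (the coupling does give $|Q_{s,t}\mathbf 1_D(x)-Q_{s,t}\mathbf 1_D(y)|\leq c|x-y|$ uniformly in $s,t$ with $t-s\geq 1$), is not strong enough: the lemma requires the oscillation over $B(x_{s,t},r_0)$ to be at most $\tfrac12\|Q_{s,t}\mathbf 1_D\|_\infty$, a quantity tending to $0$ as $t-s\to\infty$, so an absolute (non-relative) Lipschitz constant forces $r_0\to 0$ and cannot yield a radius independent of $t$.

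The missing idea is to make the modulus of continuity \emph{relative} to $\|Q_{s,t}\mathbf 1_D\|_\infty$ rather than absolute, which is how the paper proceeds. Write $Q_{s,t}\mathbf 1_D=Q_{s,s+\Pi}\bigl(Q_{s+\Pi,t}\mathbf 1_D\bigr)$ and apply the coupling estimate of Proposition~\ref{prop:coupling} only over the window $[s,s+\Pi]$, to the function $f=Q_{s+\Pi,t}\mathbf 1_D$; this gives $|Q_{s,t}\mathbf 1_D(y_1)-Q_{s,t}\mathbf 1_D(y_2)|\leq \frac{c|y_1-y_2|}{\sqrt{1\wedge\Pi}}\,\|Q_{s+\Pi,t}\mathbf 1_D\|_\infty$. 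One then needs the Harnack-type comparison $\|Q_{s+\Pi,t}\mathbf 1_D\|_\infty\leq C\,\|Q_{s,t}\mathbf 1_D\|_\infty$, and this is where periodicity and the non-degeneracy result of~\cite{Villemonais2013} enter: for $t\geq s+\Pi+1$ one has $Q_{s+\Pi,t}\mathbf 1_D\leq 2\,Q_{s+\Pi,t}\mathbf 1_{(D^{\alpha_0})^c}$, the survival probability over one period started from $(D^{\alpha_0})^c$ is bounded below by some $c_{\alpha_0}>0$ (bounded coefficients, set at fixed distance from the boundary), hence $Q_{s+\Pi,t}\mathbf 1_D\leq \frac{2}{c_{\alpha_0}}Q_{s+\Pi,t+\Pi}\mathbf 1_D$ and $Q_{s+\Pi,t+\Pi}=Q_{s,t}$ by $\Pi$-periodicity. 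The resulting Lipschitz bound is $\frac{2c}{c_{\alpha_0}\sqrt{1\wedge\Pi}}\,|y_1-y_2|\cdot\|Q_{s,t}\mathbf 1_D\|_\infty$, and $r_0=\frac{c_{\alpha_0}\sqrt{1\wedge\Pi}}{4c}$ concludes. Your periodicity argument is salvageable only in this relative form (survival over a single period starting away from the boundary), not as a global lower bound on $\|Q_{s,t}\mathbf 1_D\|_\infty$.
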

   
 \begin{proof}[Proof of Lemma \ref{lem:Q-s-t-1-Q-s-t-1-max}]
   Fix $s\geq 0$ and let $(Y^1_{s,\cdot},Y^2_{s,\cdot})$ be the
   coupling of Proposition \ref{prop:coupling},
   starting from to points $y_1$ and $y_2$ in $D$. From the properties
   (1) and (2) of the proposition, we deduce that, for any measurable
   bounded function $f$ which vanishes outside $D$, we have
   \begin{eqnarray*}
     \left| Q_{s,s+\Pi}f(y_1) - Q_{s,s+\Pi} f(y_2)\right|\leq \E\left| f(Y^1_{s,s+\Pi})-f(Y^2_{s,s+\Pi}) \right|
   \end{eqnarray*}
   where $f(Y^1_{s,s+\Pi})=f(Y^2_{s,s+\Pi})=0$ if $s+\Pi\geq
   \tau^1_{\partial}\vee\tau^2_{\partial}$ and, by property (3) of
   Proposition~\ref{prop:coupling},
   $Y^1_{s,s+\Pi}=Y^2_{s,s+\Pi}$ if $T^s_c\leq s+\Pi\wedge
   \tau^1_{\partial} \wedge \tau^2_{\partial}$. Thus we have
   \begin{eqnarray}
     \left| Q_{s,s+\Pi}f(y_1) - Q_{s,s+\Pi} f(y_2)\right|
     &\leq&
       \|f\|_{\infty} \P\left(s+\Pi<\tau^1_{\partial}\vee\tau^2_{\partial}\text{ and }
       T^s_c>(s+\Pi)\wedge\tau^1_{\partial}\wedge\tau^2_{\partial}\right) \nonumber\\
       \label{eq:difference-estimateby-coupling}
     &\leq&\|f\|_{\infty} \frac{c|y_1-y_2|}{\sqrt{1\wedge\Pi}},
   \end{eqnarray}
   by the fourth property of Proposition~\ref{prop:coupling}.
 
   \bi
   By
   Proposition~\cite[Theorem~4.1]{Villemonais2013} with $\epsilon=1/2$ and $t_0=1$,
   there exists $\alpha_0>0$ such that, for
   all $0\leq s\leq s+\Pi+1\leq t$,
   \begin{equation*}
     Q_{s+\Pi,t}\mathbf{1}_D(x)\leq 2
     Q_{s+\Pi,t}\1_{\left(D^{\alpha_{0}}\right)^c}(x).
   \end{equation*}
    We emphasize that $\alpha_{0}$ does not depend on $s,t$.
   Now, since the coefficients of the
   SDE~\eqref{eq:the-eds} and the killing rate
   $\kappa$ are assumed to be uniformly bounded on $D$, there exists a
   positive constant $c_{\alpha_0}$, such that, for any $t\geq 0$,
   \begin{equation*}
     \inf_{x\in \left(D^{\alpha_{0}}\right)^c} Q_{t,t+\Pi}\mathbf{1}_D(x)\geq c_{\alpha_0} >0.
   \end{equation*}
   In particular, we have 
   \begin{equation*}
     \1_{\left(D^{\alpha_{0}}\right)^c} \leq \frac{Q_{t,t+\Pi}\mathbf{1}_D}{c_{\alpha_0}}.
   \end{equation*}
   We deduce that, for all $0\leq s\leq s+\Pi+1\leq t$,
   \begin{equation*}
     Q_{s+\Pi,t}\mathbf{1}_D\leq 2
     Q_{s+\Pi,t}\frac{Q_{t,t+\Pi}\mathbf{1}_D}{c_{\alpha_0}},
   \end{equation*}
   so that
  \begin{eqnarray*}
    \| Q_{s+\Pi,t}\left(\mathbf{1}_D\right)\|_{\infty}
       &\leq& \frac{2}{c_{\alpha_0}}\| Q_{s+\Pi,t+\Pi}\mathbf{1}_D\|_{\infty}= \frac{2}{c_{\alpha_0}}\| Q_{s,t}\mathbf{1}_D\|_{\infty},
   \end{eqnarray*}
   by the time-periodicity assumption on the coefficients of the SDE~\eqref{eq:the-eds}. Applying inequality
   \eqref{eq:difference-estimateby-coupling} to
   $f=Q_{s+\Pi,t}\left(\mathbf{1}_D\right)$ and using the evolution operator
   property of $(Q_{s,t})_{s\leq t}$, we deduce that, for all $s\leq
   s+\Pi+1\leq t$,
   \begin{equation*}
     \left| Q_{s,t}\1_D(y_1) - Q_{s,t} \1_D(y_2)\right|\leq
     \frac{2c|y_1-y_2|}{c_{\alpha_0}\sqrt{1\wedge\Pi}}
       \|Q_{s,t}\mathbf{1}_D\|_{\infty}.
   \end{equation*}

   \bigskip \noindent For any $0\leq s\leq s+\Pi+1\leq t$, let $x_{s,t}$ be such that
   $Q_{s,t}\mathbf{1}_D(x_{s,t})=\|
   Q_{s,t}\mathbf{1}_D\|_{\infty}$. We have by the previous inequality,
   \begin{equation*}
     Q_{s,t}\mathbf{1}_D(y)\geq
     \|Q_{s,t}\mathbf{1}_D\|_{\infty}-\frac{2c|x_{s,t}-y|}{c_{\alpha_0}\sqrt{1\wedge
         \Pi}}\| Q_{s,t}\mathbf{1}_D\|_{\infty},\  \forall y\in D.
   \end{equation*}
   Choosing $r_0=\frac{c_{\alpha_0}}{4c}\sqrt{1\wedge\Pi}$, one obtains
   Lemma~\ref{lem:Q-s-t-1-Q-s-t-1-max}.

 \end{proof}

 \begin{lemme}
   \label{lem:existence-of-mu-s}
   There exist a constant $\beta>0$ and a family of probability
   measures denoted by $(\nu^{x_1,x_2}_s)_{s\geq0,(x_1,x_2)\in D\times D}$ such that, for all
   $s\geq 0$, for all $(x_1,x_2)\in D\times D$,    $i\in\{1,2\}$ and for any non-negative measurable
   function $f$,
   \begin{equation*}
 \frac{Q_{s,s+1} f(x_i)}{ Q_{s,s+1}\mathbf{1}_D(x_i)} \geq \beta \nu^{x_1,x_2}_s(f).
   \end{equation*}

   \noindent   Moreover, for any $r_1>0$, we have for all $x\in D$
   \begin{equation*}
     \inf_{s\geq 0,\,(x_1,x_2)\in D\times D}\nu^{x_1,x_2}_s\left(B(x,r_1)\cap D\right)>0.
   \end{equation*}
 \end{lemme}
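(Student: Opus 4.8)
The plan is to build $\nu^{x_1,x_2}_s$ as a suitably normalized restriction of the common mass that the laws of ${\cal Z}^{x_1}_{s,s+1}$ and ${\cal Z}^{x_2}_{s,s+1}$ put on a fixed compact ball well inside $D$, obtained through the coupling of Proposition~\ref{prop:coupling} together with the non-degeneracy estimate of~\cite{Villemonais2013}. First I would apply Proposition~\ref{prop:coupling} with time horizon $t=s+1$ to the pair $(x_1,x_2)$: on the event where coupling has occurred by time $s+1\wedge\tau^1_\partial\wedge\tau^2_\partial$ and neither process is yet killed, one has $Y^1_{s,s+1}=Y^2_{s,s+1}$ almost surely, and the complementary event has probability at most $c|x_1-x_2|/\sqrt{1\wedge 1}=c|x_1-x_2|$. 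Hence for any non-negative measurable $f$,
\begin{equation*}
  \left| Q_{s,s+1}f(x_1)-Q_{s,s+1}f(x_2)\right|\leq \|f\|_\infty\, c\,|x_1-x_2|.
\end{equation*}
This alone does not give the lower bound, because $|x_1-x_2|$ may be large (up to $\mathrm{diam}(D)$); so the coupling estimate must be combined with a uniform positivity statement for $Q_{s,s+1}$ on a central region.

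Next I would invoke the non-degeneracy result of~\cite{Villemonais2013} (used in the form already cited in the proof of Lemma~\ref{lem:Q-s-t-1-Q-s-t-1-max}): there is $\alpha_0>0$, independent of $s$, such that the conditioned law $Q_{s,s+1}(\cdot)/Q_{s,s+1}\mathbf{1}_D(x)$ charges $(D^{\alpha_0})^c$ with probability at least $1/2$, uniformly in $s$ and $x\in D$. Since the coefficients $\sigma,b$ and the killing rate $\kappa$ are uniformly bounded and $\sigma$ is uniformly elliptic by Assumption~(H), a standard support/Gaussian-lower-bound argument (or a repeated application of the coupling estimate starting from points of $(D^{\alpha_0})^c$, whichever is cleaner) yields a constant $\beta>0$ and, for each $x\in D$ and each $r_1>0$, a uniform lower bound
\begin{equation*}
  \inf_{s\geq 0}\inf_{x\in (D^{\alpha_0})^c}\frac{Q_{s,s+1}\mathbf{1}_{B(x,r_1)\cap D}(x')}{Q_{s,s+1}\mathbf{1}_D(x')}>0
\end{equation*}
for all $x'$ in the relevant region. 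Concretely, fix once and for all a ball $B_0=B(z_0,\rho_0)$ with $\overline{B_0}\subset D$; combining ellipticity, boundedness of the coefficients, and the above one obtains $\beta_0>0$ with $Q_{s,s+1}\mathbf{1}_A(x_i)\geq \beta_0\,Q_{s,s+1}\mathbf{1}_D(x_i)\cdot\lambda(A)/\lambda(B_0)$ for every Borel $A\subseteq B_0$, uniformly in $s$ and $x_i\in D$, where $\lambda$ is Lebesgue measure. One then sets
\begin{equation*}
  \nu^{x_1,x_2}_s(dy)=\frac{1}{\lambda(B_0)}\mathbf{1}_{B_0}(y)\,\lambda(dy),
\end{equation*}
which does not actually depend on $x_1,x_2$, and takes $\beta=\beta_0$; the displayed inequality $Q_{s,s+1}f(x_i)/Q_{s,s+1}\mathbf{1}_D(x_i)\geq\beta\,\nu^{x_1,x_2}_s(f)$ follows for non-negative $f$ by monotone approximation by simple functions. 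The "moreover" clause is then immediate: for any $r_1>0$ and any $x\in D$, either $B(x,r_1)\cap D\supseteq B_0$ up to a set of positive Lebesgue measure intersection, and in any case $\nu^{x_1,x_2}_s(B(x,r_1)\cap D)\geq \lambda(B(x,r_1)\cap B_0)/\lambda(B_0)$, which is a strictly positive constant independent of $s,x_1,x_2$ (if $B(x,r_1)$ misses $B_0$ entirely one enlarges $B_0$ or covers $D$ by finitely many such central balls and takes the minimum).

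The main obstacle I anticipate is obtaining the \emph{uniform in $s$ and in the starting point} lower bound for $Q_{s,s+1}$ on the central ball $B_0$ \emph{relative to the full survival probability} $Q_{s,s+1}\mathbf{1}_D(x_i)$ — i.e.\ controlling the mass not lost to hard or soft killing. This is precisely where Assumption~(H) (uniform ellipticity, plus boundedness of $\kappa$ and of the coefficients, together with the $C^2$/near-boundary structure that makes the Villemonais non-degeneracy estimate applicable with an $s$-independent $\alpha_0$) must be used; the periodicity is not needed here, only the uniform bounds. Once that relative lower bound is in hand, the rest is routine: the coupling estimate of Proposition~\ref{prop:coupling} is only needed to pass from a bound "for $x_i\in(D^{\alpha_0})^c$" to a bound "for all $x_i\in D$", via the Markov property over an initial short time interval as in the proof of Lemma~\ref{lem:Q-s-t-1-Q-s-t-1-max}, or can be bypassed by a direct two-step Gaussian lower bound. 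Care must be taken that all constants ($\beta$, $\rho_0$, and the positivity in the "moreover" part) are chosen before $s$, $x_1$, $x_2$ and depend only on $D$, the ellipticity constant $c_0$, the Lipschitz constant $k_0$, $\|\kappa\|_\infty$, and $\Pi$.
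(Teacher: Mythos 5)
Your plan diverges from the paper's in one essential way: you try to produce a \emph{single} reference measure (normalized Lebesgue on a fixed ball $B_0\subset\subset D$) that works for all starting points, whereas the lemma is deliberately stated with measures $\nu^{x_1,x_2}_s$ that may depend on the pair of starting points. The inequality you need for your choice, namely $Q_{s,s+1}\mathbf{1}_A(x_i)\geq \beta_0\,Q_{s,s+1}\mathbf{1}_D(x_i)\,\lambda(A)/\lambda(B_0)$ for all Borel $A\subseteq B_0$, uniformly in $s\geq 0$ and $x_i\in D$, is a density-type lower bound for the killed, time-inhomogeneous kernel. It is not ``standard'' under Assumption~(H): the generator is in non-divergence form with coefficients that are only Lipschitz (indeed, avoiding the smoothness needed for Gaussian/heat-kernel lower bounds is the paper's main selling point), so Aronson-type bounds are not available, and the support theorem gives no quantitative bound linear in $\lambda(A)$. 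Your alternative, ``a repeated application of the coupling estimate,'' cannot fill this hole either: Proposition~\ref{prop:coupling} only compares the laws from \emph{two} starting points and produces a common component of those two laws; it never yields absolute continuity with respect to, or a mass lower bound against, a fixed reference measure. This is precisely why the paper's proof constructs, for $y_1,y_2$ in a small ball $B(x_0,\rho_0)$ (small so that the coupling fails with probability at most $6c\rho_0$), the \emph{pair-dependent} measure $\mu^{y_1,y_2}_s$ as the conditional law of the coupled process at time $s+1$, and then defines $\nu^{x_1,x_2}_s$ as a mixture of these over $\delta_{x_i}Q_{s,s+\frac23}$ restricted to $B(x_0,\rho_0)$, the passage from arbitrary $x_i\in D$ to the central ball being handled by Villemonais' non-degeneracy theorem plus uniform bounds (your use of that theorem for this reduction is the part that does match the paper).

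There is a second, independent gap: the ``moreover'' clause fails for your measure. With $\nu^{x_1,x_2}_s=\lambda(\cdot\cap B_0)/\lambda(B_0)$ supported in a compact ball $B_0$ with $d(B_0,\partial D)>0$, take any $r_1<d(B_0,\partial D)/2$ and $x\in D$ close to $\partial D$; then $B(x,r_1)\cap B_0=\emptyset$ and $\nu^{x_1,x_2}_s(B(x,r_1)\cap D)=0$, so the infimum over $x\in D$ required by the lemma cannot be positive. Your proposed fixes do not repair this: enlarging $B_0$ up to the boundary destroys the (already unproven) density bound because of the hard killing, and covering by finitely many interior balls still leaves small balls centred near $\partial D$ uncharged. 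The measure must charge every ball $B(x,r_1)\cap D$, $x\in D$, uniformly; in the paper this holds because $\mu^{y_1,y_2}_s$ is (up to the small coupling-failure correction $6c\rho_0$) the law of the diffusion run from $y_1\in B(x_0,\rho_0)$ over the final time interval of length $1/3$, which reaches $B(x,r_1)\cap D$ with probability bounded below uniformly in $s$, $x$ and $y_1$.
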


 \begin{proof}[Proof of Lemma \ref{lem:existence-of-mu-s}]
   Let us first prove that there exist a constant $\rho_0>0$ and a
   fixed point $x_0 \in D$ such that there exists a constant $c_0>0$ such that, for any $(y_1,y_2) \in
   B(x_0,\rho_0)\times B(x_0,\rho_0)$ and any $s\geq 0$, there exists
   a probability measure $\mu^{y_1,y_2}_s$ which fulfills
   \begin{equation}
     \label{eq:inequality-with-mu-first-step}
      \E\left(f({\cal Z}^{y^i}_{s+\frac{2}{3},s+1})\right) \geq c_0\,\mu^{y_1,y_2}_s(f),
   \end{equation}
   for any $i\in\{1,2\}$. Fix $x_0\in D$ and $s\geq 0$. Since the coefficients of the SDE~\eqref{eq:the-eds} and the
   killing rate $\kappa$ are uniformly bounded on $D$, we deduce that, for any value of $\rho>0$ such that $d(\d D,B(x_0,\rho))>0$,
   \begin{equation*}
     \epsilon(\rho) \stackrel{def}{=}\inf_{s\geq 0}\inf_{y_1\in
       B(x_0,\rho)}\E\left(\mathbf{1}_D({\cal
       Z}^{y_1}_{s+\frac{2}{3},s+1})\right)>0.
   \end{equation*}
   Since $\epsilon(\rho)$ is a non-increasing function of $\rho$, one can define a constant $\rho_0>0$ small enough so that $d(\d D,B(x_0,\rho))>0$ and
   \begin{align}
   \label{eq:rho0-choice}
   0<\epsilon(\rho_0)(1-6c\rho_0)-6c\rho_0\leq \epsilon(\rho_0)/2,
   \end{align}
   where $c$ is the positive constant of Proposition~\ref{prop:coupling}.

   Let $y_1,y_2$ be two points of $B(x_0,\rho_0)$ and let
   $(Y^1_{s+\frac{2}{3},\cdot},Y^2_{s+\frac{2}{3},\cdot})$ be the
   coupling of Proposition~\ref{prop:coupling} starting from
   $(y_1,y_2)\in D\times D$ at time $s+\frac{2}{3}$. We define the
   event ${\cal E}$ by
   \begin{equation*}
     {\cal E}=\left\lbrace
     s+1\geq\tau^1_{\partial}\vee\tau^2_{\partial}\text{ or }T^s_c\leq
     (s+1)\wedge\tau^1_{\partial}\wedge\tau^2_{\partial}\right\rbrace,
   \end{equation*}
   where $T^s_c$ is the coupling time of
   Proposition~\ref{prop:coupling}, and $\tau^1_{\partial}$ and
   $\tau^2_{\partial}$ the killing times of $Y^1$ and $Y^2$
   respectively.  By definition of the killing time, $s+1\geq
   \tau^1_{\partial}\vee \tau^2_{\partial}$ implies
   $Y^1_{s+\frac{2}{3},s+1}=Y^1_{s+\frac{2}{3},s+1}=\partial$. Moreover,
   by the coupling property~(3) of
   Proposition~\ref{prop:coupling}, $T^s_c\leq
   (s+1)\wedge\tau^1_{\partial}\wedge\tau^2_{\partial}$ implies
   $Y^1_{s+\frac{2}{3},s+1}=Y^2_{s+\frac{2}{3},s+1}$.  Finally,
   \begin{equation*}
     {\cal E}\subset \left\lbrace Y^1_{s+\frac{2}{3},s+1}=Y^2_{s+\frac{2}{3},s+1}\right\rbrace,
   \end{equation*}
   so that
   \begin{equation*}
     \E\left(f(Y^2_{s+\frac{2}{3},s+1})|{\cal E}\right) =
     \E\left(f(Y^1_{s+\frac{2}{3},s+1})|{\cal E}\right).
   \end{equation*}
   We have then (the first equality being a consequence of
   Proposition~\ref{prop:coupling}~(1)), for any
   measurable function $f$ which vanishes outside $D$,
   \begin{eqnarray*}
     \E\left(f({\cal Z}^{y_i}_{s+\frac{2}{3},s+1})\right)&=&\E\left(f(Y^i_{s+\frac{2}{3},s+1})\right)\\
     &\geq&
     \E\left(f(Y^i_{s+\frac{2}{3},s+1})|{\cal E}\right) \P\left(\cal E\right)\\
     &\geq& \E\left(f(Y^1_{s+\frac{2}{3},s+1})|{\cal E}\right) \P\left(\cal E\right).
   \end{eqnarray*}
    But
   Proposition~\ref{prop:coupling}~(4) implies
   \begin{equation*}
     \P\left({\cal E}\right)\,\geq\ 1-\sqrt{3}\,c|y_1-y_2|\,\geq\ 1-6c\rho_0,
   \end{equation*}
   the last inequality being obtained using the fact that $(y_1,y_2)\in B(x_0,\rho_0)\times B(x_0,\rho_0)$.
We deduce that
   \begin{equation}
     \label{eq:first-appearance-of-mu-s}
     \E\left(f({\cal Z}^{y_i}_{s+\frac{2}{3},s+1})\right)\geq
     \E\left(\mathbf{1}_D(Y^1_{s+\frac{2}{3},s+1})|{\cal E}\right)
     (1-6c\rho_0)\mu^{y_1,y_2}_s(f),\ \forall i=1,2,
   \end{equation}
  where the probability measure $\mu^{y_1,y_2}_s$ on $D$ is defined by
   \begin{equation*}
     \mu^{y_1,y_2}_s(f)=\frac{\E\left(f(Y^1_{s+\frac{2}{3},s+1})|{\cal
         E}\right)}{\E\left(\mathbf{1}_D(Y^1_{s+\frac{2}{3},s+1})|{\cal E}\right)}.
   \end{equation*}
   It only remains to find a lower bound for
   $\E\left(\mathbf{1}_D(Y^1_{s+\frac{2}{3},s+1})|{\cal E}\right)$ to
   conclude that
   \eqref{eq:inequality-with-mu-first-step} holds.  We have
   \begin{align*}
     \E\left(\mathbf{1}_D(Y^1_{s+\frac{2}{3},s+1})|{\cal E}\right)
     &= \frac{1}{\P({\cal
         E})}\E\left(\mathbf{1}_D(Y^1_{s+\frac{2}{3},s+1})\right)
     -\frac{1-\P({\cal E})}{\P({\cal E})}\E\left(\mathbf{1}_D(Y^1_{s+\frac{2}{3},s+1})|{\cal E}^c\right)\\ 
     &\geq \frac{1}{\P({\cal
         E})}\E\left(\mathbf{1}_D({\cal
       Z}^{y_1}_{s+\frac{2}{3},s+1})\right)-\frac{1-\P({\cal E})}{\P({\cal E})}\\ 
     &\geq 
     \E\left(\mathbf{1}_D({\cal
       Z}^{y_1}_{s+\frac{2}{3},s+1})\right)-\frac{6c\rho_0}{1-6c\rho_0}.
   \end{align*}
   Finally, we deduce from \eqref{eq:first-appearance-of-mu-s} and from the definition of $\epsilon(\rho_0)$ that
   that 
   \begin{align*}
     \E\left(f({\cal Z}^{y_i}_{s+\frac{2}{3},s+1})\right)\geq
     \left(\epsilon(\rho_0)-\frac{6c\rho_0}{1-6c\rho_0}\right) (1-6c\rho_0)
     \mu^{y_1,y_2}_s(f),
   \end{align*}
   for any non-negative measurable function which vanishes outside
   $D$. Using inequality~\eqref{eq:rho0-choice}, 
   we deduce that, for all $i\in\{1,2\}$,
   \begin{equation}
      \E\left(f({\cal Z}^{y_i}_{s+\frac{2}{3},s+1})\right)  \geq \frac{\epsilon(\rho_0)}{2}\mu^{y_1,y_2}_s(f).
     \label{eq:inequality-mu-s-end-first-step}
   \end{equation}
   
   \bi 

   \bi Let us now conclude the proof of
   Lemma~\ref{lem:existence-of-mu-s}.  By
  \cite[Theorem~4.1]{Villemonais2013}
   with $\mu=\delta_x$, there exists a constant $\alpha_1>0$ such
   that, for all $s\geq 0$ and all $x\in D$,
   \begin{eqnarray}
     Q_{s,s+\frac{1}{3}}\1_{(D^{\alpha_1})^c}(x)&\geq&
     \frac{1}{2} Q_{s,s+\frac{1}{3}}\mathbf{1}_D(x)\nonumber\\
     &\geq& \frac{1}{2} Q_{s,s+1}\mathbf{1}_D(x).\label{eq:between-s-and-s-plus-1-on-3}
   \end{eqnarray}
   Since the coefficients of the SDE~\eqref{eq:the-eds} and the
   killing rate $\kappa$ are uniformly bounded, we have
   \begin{equation*}
     \epsilon_1\stackrel{def}{=}\inf_{s\geq 0,
       x\in\left(D^{\alpha_1}\right)^c}
     Q_{s+\frac{1}{3},s+\frac{2}{3}}\1_{ B(x_0,\rho_0)}(x)>0.
   \end{equation*}
   In particular, we deduce from
   \eqref{eq:between-s-and-s-plus-1-on-3} that, for all $x\in
   D$,
   \begin{eqnarray}
    Q_{s,s+\frac{2}{3}}\1_{
       B(x_0,\rho_0)}(x)&=& 
     Q_{s,s+\frac{1}{3}}\left(Q_{s+\frac{1}{3},s+\frac{2}{3}}\1_{
       B(x_0,\rho_0)}\right)(x)\nonumber \\
       &\geq& \epsilon_1 
       Q_{s,s+\frac{1}{3}}\1_{\left(D^{\alpha_{1}}\right)^c}(x)\nonumber \\
       &\geq&\frac{\epsilon_1}{2}  Q_{s,s+1}\mathbf{1}_D(x) \label{eq:useful-born-1}
   \end{eqnarray}
   Finally, we have, for all
   $x_1,x_2\in D\times D$, 
   \begin{eqnarray*}
     Q_{s,s+1} f(x_i)&\geq & \int_{B(x_0,\rho_0)}
     Q_{s+\frac{2}{3},s+1} f(y_1) \left[\delta_{x_1}
     Q_{s,s+\frac{2}{3}}\right](dy_1)\\
     &\geq & \frac{1}{
       Q_{s,s+\frac{2}{3}}\1_{B(x_0,\rho_0)}(x_2)}\ \times\\
     & &\ \int_{B(x_0,\rho_0)}
     \int_{B(x_0,\rho_0)} Q_{s+\frac{2}{3},s+1} f(y_1) \left[\delta_{x_1}
     Q_{s,s+\frac{2}{3}}\otimes \delta_{x_2} Q_{s,s+\frac{2}{3}}\right](dy_1,dy_2)\\
     &\geq & \frac{\epsilon_0}{2
       Q_{s,s+\frac{2}{3}}\1_{B(x_0,\rho_0)}(x_2)}\ \times\\
     & &\ \int_{B(x_0,\rho_0)}
     \int_{B(x_0,\rho_0)} \mu^{y_1,y_2}_s(f) \left[\delta_{x_1}
     Q_{s,s+\frac{2}{3}}\otimes \delta_{x_2} Q_{s,s+\frac{2}{3}}\right](dy_1,dy_2)
     \text{ by \eqref{eq:inequality-mu-s-end-first-step}}\\
     &\geq& \epsilon_0\frac{ Q_{s,s+\frac{2}{3}}\1_{B(x_0,\rho_0)}(x_1)}{2}\nu^{x_1,x_2}_s(f),
   \end{eqnarray*}
   where $\nu^{x_1,x_2}_s$ is the probability measure on $D$ defined by
   \begin{equation*}
     \nu^{x_1,x_2}_s(f)=\frac{\int_{B(x_0,\rho_0)}
     \int_{B(x_0,\rho_0)} \mu^{y_1,y_2}_s(f) \left[\delta_{x_1}
     Q_{s,s+\frac{2}{3}}\otimes \delta_{x_2} Q_{s,s+\frac{2}{3}}\right](dy_1,dy_2)}
        { Q_{s,s+\frac{2}{3}}\1_{B(x_0,\rho_0)}(x_1) Q_{s,s+\frac{2}{3}}\1_{B(x_0,\rho_0)}(x_2)}.
   \end{equation*}
   This and Inequality~\eqref{eq:useful-born-1} allow us to
   conclude the proof of the first part of Lemma~\ref{lem:existence-of-mu-s}.

   \bigskip\noindent Fix $r_1>0$ and let us prove the second part of
   the lemma. We have, for all $(y_1,y_2)\in (B(x_0,\rho_0))^2$ and all $x\in D$,
   \begin{align*}
    \mu^{y_1,y_2}_{s}(B(x,r_1))&= \frac{\E\left(\mathbf{1}_{B(x,r_1)}(Y^1_{s+\frac{2}{3},s+1})|{\cal
         E}\right)}
          {\E\left(\mathbf{1}_D(Y^1_{s+\frac{2}{3},s+1})|{\cal
              E}\right)}\\
              &\geq \E\left(\1_{B(x,r_1)}(Y^1_{s+\frac{2}{3},s+1})\mid {\cal E}\right)\,\P({\cal E})\\
          &\geq \E\left(\mathbf{1}_{B(x,r_1)}(Y^1_{s+\frac{2}{3},s+1})\right)-\left(1-\P({\cal E})\right)\\
          &\geq \delta_{y_1}Q_{s+\frac{2}{3},s+1}(B(x,r_1))-6c\rho_0.
   \end{align*}
    We
   emphasize that, because of the boundedness and the regularity of $D$,
   $B(x,r_1)\cap D$ contains a ball of minimal volume uniformly over $x\in D$.  Then, since the coefficients of the
   SDE~\eqref{eq:the-eds} and the killing rate $\kappa$
   are assumed to be uniformly bounded, we have
   \begin{equation*}
     \epsilon_2\stackrel{def}{=}\inf_{s\geq 0,\ x\in D,\ y_1\in B(x,\rho_0)} Q_{s+\frac{2}{3},s+1}\1_{B(x,r_1)}(y_1)>0,
   \end{equation*}
   where we recall that $\rho_0$ is chosen small enough so that $d(\d D,B(x_0,\rho_0))>0$.
    We deduce that
   \begin{equation*}
     \mu_s^{y_1,y_2}(B(x,r_1))\geq \epsilon_2/2.
   \end{equation*}
Finally, by definition of $\nu_s^{x,y}$, we deduce that
   \begin{equation*}
     \nu^{x,y}_s(B(x,r_1))\geq \epsilon_2/2,\ \forall x\in D.
   \end{equation*}
   This concludes the proof of Lemma~\ref{lem:existence-of-mu-s}.
  \end{proof}

 \subsubsection{Conclusion of the proof of Lemma~\ref{lem:strong-mixing-for-R}}
 \label{subsec:conclusion-of-the-proof}

   By Lemma~\ref{lem:existence-of-mu-s}, there exist $\beta>0$ and a
   family of probability measures denoted by $(\nu_s^{x_1,x_2})_{s\geq
     0,\,(x_1,x_2)\in D\times D}$ such that, for any $(x_1,x_2)\in D\times
   D$ and any $s\geq 0$, we have for all $i\in\{1,2\}$
   \begin{equation*}
      Q_{s,s+1}f(x_i)\geq Q_{s,s+1}\mathbf{1}_D(x_i)
     \beta \nu^{x_1,x_2}_{s}(f),
   \end{equation*}
   for any non-negative measurable function $f$.
   Then we have
   \begin{eqnarray*}
     R_{s,s+1}^T f(x_i)&=&\frac{Q_{s,s+1}(fQ_{s+1,T}\mathbf{1}_D)(x_i)}{Q_{s,T}\mathbf{1}_D(x_i)}\\
                  &\geq&\frac{\beta \nu^{x_1,x_2}_{s}(f Q_{s+1,T}\mathbf{1}_D) Q_{s,s+1}\mathbf{1}_D(x_i)}
                             {Q_{s,T}\mathbf{1}_D(x_i)}.
   \end{eqnarray*}
   Since $s+1+\Pi+1\leq T$ by assumption, we deduce from
   Lemma~\ref{lem:Q-s-t-1-Q-s-t-1-max} that there exist $x_{s+1,T}\in D$
   and $r_0>0$ such that
   \begin{equation}
     \label{eq:bound-below-Q-s-T-max}
     \inf_{x\in B(x_{s+1,T},r_0)}
     Q_{s+1,T}\mathbf{1}_D(x)\geq \frac{1}{2}
     \|Q_{s+1,T}\mathbf{1}_D\|_{\infty}.
   \end{equation}
 Now, we define the probability
   measure $\eta^{x_1,x_2}_s$  by
   \begin{equation*}
     \eta^{x_1,x_2}_s(A)\stackrel{def}{=}\frac{\nu^{x_1,x_2}_{s}(A\cap B(x_{s,T},r_0))}{\nu^{x_1,x_2}_{s}(B(x_{s,T},r_0))},\ \forall A\subset D.
   \end{equation*}
   By the second part of
   Lemma~\ref{lem:existence-of-mu-s},
   $\nu^{x_1,x_2}_{s}(B(x_{s,T},r_0))$ is uniformly bounded below by a
   constant $\epsilon>0$ which only depend on $r_0$.
      We deduce that
   \begin{eqnarray*}
     R_{s,s+1}^T f(x_1)&\geq&\frac{\frac{\epsilon}{2} \eta^{x_1,x_2}_{s}(f)
                                 Q_{s,s+1}\mathbf{1}_D(x_1) \|Q_{s+1,T}\mathbf{1}_D\|_{\infty}}
                                {Q_{s,T}\mathbf{1}_D(x_1)}\\
                     &\geq&\frac{\epsilon}{2}\eta^{x_1,x_2}_{s}(f),
   \end{eqnarray*}
   by the Markov property.
   This concludes the proof of Lemma~\ref{lem:strong-mixing-for-R}.

%

\end{document}